\documentclass[11pt]{amsart}
\usepackage{amsmath,amsthm,amsfonts,amssymb,times,latexsym,mathrsfs}

\usepackage{hyperref}

\usepackage[dvipsnames,svgnames,table]{xcolor}


\numberwithin{equation}{section}  

\DeclareMathAlphabet{\curly}{U}{rsfs}{m}{n}  

\textheight=8.5in
\textwidth=6.5in
\oddsidemargin=0pt
\evensidemargin=0pt
\hoffset=0in

\theoremstyle{remark}

\theoremstyle{plain}

\newtheorem{lem}{Lemma}[section]
\newtheorem{thm}{Theorem}
\newtheorem{cor}[thm]{Corollary}

\numberwithin{equation}{section}


\newcommand{\ZZ}{{\mathbb Z}}

\newcommand{\NN}{{\mathbb N}}


\newcommand{\cA}{\ensuremath{\mathcal{A}}}

\newcommand{\cE}{\ensuremath{\mathcal{E}}}

\newcommand{\cM}{\ensuremath{\mathcal{M}}}


\newcommand{\sR}{\ensuremath{\mathscr{R}}}

%
%



\newcommand{\bV}{\ensuremath{\mathbf{V}}}
\newcommand{\bU}{\ensuremath{\mathbf{U}}}
\newcommand{\bW}{\ensuremath{\mathbf{W}}}
\newcommand{\bX}{\ensuremath{\mathbf{X}}}

\newcommand{\PR}{\mathbb{P}}
\newcommand{\E}{\mathbb{E}}



\makeatletter
\renewcommand{\pmod}[1]{\allowbreak\mkern7mu({\operator@font mod}\,\,#1)}
\makeatother


\newcommand{\be}{\begin{equation}}
\newcommand{\ee}{\end{equation}}

\newcommand{\ssum}[1]{\sum_{\substack{#1}}}  
\newcommand{\sprod}[1]{\prod_{\substack{#1}}}  


\newcommand{\eps}{\ensuremath{\varepsilon}}
\newcommand{\er}{\mathrm{e}}


\renewcommand{\le}{\leqslant}

\renewcommand{\ge}{\geqslant}

\newcommand{\fl}[1]{{\ensuremath{\left\lfloor {#1} \right\rfloor}}}  
\renewcommand{\(}{\left(}
\renewcommand{\)}{\right)}

\newcommand{\pfrac}[2]{\left(\frac{#1}{#2}\right)}  




\newcommand{\ms}{\medskip}

\newcommand{\ssc}[2]{\ensuremath{{#1}_{#2}^{\phantom{2}}}}

%
%

\begin{document}

\title{Poisson approximation of prime divisors of shifted primes}
\author{Kevin Ford}

\address{Department of Mathematics, University
  of Illinois at Urbana-Champaign}
\email{ford126@illinois.edu}
\begin{abstract}
We develop an analog for shifted primes of the Kubilius model of prime
factors of integers.
We prove a total variation distance estimate for the difference between the model
and actual prime factors of shifted primes, and apply it to show that
the prime factors of shifted primes in disjoint sets behave like independent
Poisson variables.  As a consequence, we establish a transference principle
between the anatomy of random integers $\le x$ and of random shifted primes $p+a$ with $p\le x$.
\end{abstract}

\date{\today}

\maketitle


{\Large\section{Introduction}}

The prime factorization of a random integer
$n\le x$ is well-approximated, in a probabilistic sense, by a
sequence of independent random variables.  This approximation is known 
as the Kubilius model after the pioneering work of Kubilius
\cite{kubilius}; see also \cite{elliott} for a development of the model.  
In this note we develop an analogous 
model for the prime factorization of a \emph{shifted prime}
$p+a$ with $a$ fixed and $p$ a prime drawn at random from $(|a|,x]$.
We prove a total variation distance estimate for the difference between the model
and actual prime factors of shifted primes, and apply it to show that
the prime factors of shifted primes in disjoint sets behave like independent
Poisson variables.  This in turn is used to establish a general transference principle,
which states roughly that any property of the  prime factors of
a random integer $n\le x$, which is weakly dependent on the smallest and largest prime
factors of $n$, also holds for the prime factors of a random 
shifted prime $p+a$ with $p\le x$.  This excludes properties such as
bounding the size of the largest prime factor of $n$ (a notoriously difficult problem) and the property ``$2|n$'', which must be excluded since it occurs for half of integers $n\le x$, but for $\pi(x)-1$ primes $p\le x$ with $n=p+a$ when $a$ is odd,
and for only one prime $p$ when $a$ is even.

 When $a\in \{-1,1\}$,
the distribution of the prime factors of 
numbers $p+a$, $p$ being prime, plays a central role in investigations of
Euler's totient function and the sum of divisors function
(see, for example, \cite{erdos35, EGPS, EP, F98, F99, FK99, FLP1, FP2, PP}),
the Carmichael $\lambda$-function \cite{EPS, FLP2},
orders and primitive roots modulo primes \cite{FGK, GM, HB-primroots, Ho67}, 
Carmichael numbers \cite{AGP, Harman08},
and in computational problems (e.g. primality testing, factorization,
pseudo-random number generators), see e.g., \cite{AKS},
\cite{AGP}, \cite{PM} and Sections 3.3, 3.5, 4.1, 4.5 and 5.4 of \cite{CP}.

\subsection{Kubilius' model of prime factors of integers}

Take a uniformly random integer $n$ drawn from the interval $[1,x]$.
Each such $n$ has a unique prime factorization
\[
n= \prod_{p\le x} p^{v_p},
\]
where $p$ is prime and the exponents $v_p$ are now random variables.
The probability that $v_p=k$ equals
\[
\frac{1}{\fl{x}} \( \fl{\frac{x}{p^k}} - \fl{\frac{x}{p^{k+1}}} \)  = \frac{1}{p^k} 
- \frac{1}{p^{k+1}} + O\pfrac{1}{x},
\]
the error term being relatively small when $p^k$ is small.
Moreover, the variables $v_p$ are quasi-independent; that is, the 
correlations are small, again provided that the primes are small.
This last fact is a consequence of the small sieve, which is the key ingredient
in establishing the validity of the \emph{Kubilius model of integers}.

The model of Kubilius is a sequence of \emph{idealized} random variables 
which removes the error term above and is much easier to compute with.
For each prime $p$, define the random variable $X_p$ that has domain $\NN_0=\{0,1,2,3,4,\ldots\}$
and such that
\[
\PR (X_p=k) = \frac{1}{p^k} - \frac{1}{p^{k+1}} = \frac{1}{p^k} \(1-\frac{1}{p} \) \qquad (k=0,1,2,\ldots).
\]
Moreover, suppose that the variables $X_p$ are mutually independent.
The principal result, first proved by Kubilius and
later sharpened by others, is that  the random vector
\[
\bX_y = (X_p:p\le y)
\]
has distribution close to that of the random vector
\[
\bV_{x,y} = (v_p: p\le y),
\]
provided that $y=x^{o(1)}$.  It is convenient to 
describe the quality of the approximation using the 
total
variation distance $d_{TV}(X,Y)$ between two random variables
 living on the same discrete space $\Omega$:
 \be\label{dTV-def}
 d_{TV}(X,Y) := \sup_{U\subset \Omega} \big| \PR(X\in A)-\PR(Y\in A) \big|.
\ee
In \cite{ten99}, Tenenbaum gives an
asymptotic for $d_{TV}(\bX_y,\bV_{x,y})$ in terms of a convolution
of the Buchstab and Dickman functions,
as well as a simpler universal upper bound which we state here.

\ms

\textbf{Theorem A}.
(Tenenbaum \cite[Th\'eor\`eme 1.1 and (1.7)]{ten99})
Let $2\le y\le x$.  Then, for every $\eps>0$,
\[
d_{TV}(\bX_y,\bV_{x,y}) \ll_\eps u^{-u} + x^{-1+\eps}, \quad u=\frac{\log x}{\log y}.
\]

\ms

Such a bound is very useful in the study of additive arithmetic functions
(see \cite{elliott} and \cite{kubilius}), where the statistical behavior
of
\[
f(n) = \sum_{p\le x} f(p^{v_p})
\]
is well approximated by the behavior of the model function
\[
\tilde{f} = \sum_{p\le x} f(p^{X_p}).
\]

\subsection{A Kubilius type model for shifted primes}

Fix a nonzero integer $a$. 
It is expected that the distribution of the prime
factors of a random shifted prime $p+a$ with $p\le x$ behaves very much
like the distribution of the prime factors of a random integer in $[1,x]$,
provided that the statistic under consideration depends weakly on the
smallest prime factors of $p+a$; this restriction is needed since if $q$ is prime
and $q\nmid a$, then $q|(p+a)$ for a proportion $\frac{1}{q-1}$ of the primes
by the prime number theorem for arithmetic progressions \cite[\S 20]{Da}. 
This principle has been established for many statistics which are weakly dependent
on both the very smallest prime factors and the very largest prime factors.
The first such result is the seminal 1935 paper \cite{erdos35} of Erd\H os,
who showed that
the number of prime factors of $p+a$ is usually very close to 
$\log\log x$.  This established an analog for shifted primes
of the famous theorem of Hardy and Ramanujan, who
showed in 1917 that most integers $n\le x$ have about $\log\log x$
prime divisors.  To date the strongest results about the
number of prime factors of $p+a$ are due to
Timofeev \cite{Tim95} and the author \cite{HRshift}.
The theory of general
additive functions on shifted primes has been much studied, e.g. Elliott \cite{elliott97}.
However, to the author's knowledge nobody has written down an explicit
comparison analogous to Theorem A. 

We first introduce random variables $W_q$ for prime $q$.  
For $q|a$, we set $W_q=0$ with probability 1,
and for $q\nmid a$ define $W_q$ according to the rule
\be\label{Wq}
\PR(W_q=v) = \frac{1}{\phi(q^v)} - \frac{1}{\phi(q^{v+1})} = \begin{cases}
1-\frac{1}{q-1} & \text{ if } v=0,\\ \frac{1}{q^v} & \text{ if } v\ge 1.
\end{cases}
\ee
Also, the variables $W_q$ are mutually independent.
Thus, by the prime number theorem for arithmetic progressions,
$\PR(W_q=v)$ is the density of primes $p$ for which $q^v \| (p+a)$.
Let $\bW_y$ be the vector of $W_q$ for $q\le y$.
For a randomly drawn prime $p\in (|a|+1,x]$ write
\[
p+a = \prod_{q} q^{u_q}
\]
and let $\bU_{x,y}$ denote the vector $(u_q: q\le y)$.
The lower bound on $p$ ensures that $p$ is odd and $p+a \ge 2$.

Our results depend on the ``level of distribution''
 of primes in progressions with smooth moduli.
 Denote by $P^+(n)$ the largest prime factor of $n$, with the convention that
$P^+(1)=0$.  Let $P^-(n)$ be the smallest prime factor of $n$, with
the convention that $P^-(1)=\infty$.
As is usual, denote by 
$\pi(x)$ the number of primes $p\le x$ and by
$\pi(x;m,b)$ the number of primes $p\le x$
in the progression $b\mod m$.  We need that $\pi(x;m,b)$ is approximately
$\frac{\pi(x)}{\phi(m)}$ uniformly on average over smooth moduli $m$
up to some power of $x$.  
Consider the following hypothesis.

\medskip

\noindent\textbf{Hypothesis $Z(\gamma)$.}  
For any $\eps>0$ there is a $\delta>0$ so that for any $B\ge 1$ and nonzero
integer $a$ we have
\[
 \ssum{m\le x^{\gamma-\eps} \\ P^+(m)\le x^{\delta} \\ (m,a)=1} \bigg|
\pi(x;m,-a) - \frac{\pi(x)}{\phi(m)} \bigg| \ll_{B,\eps,a} \frac{x}{(\log x)^B}.
\]

Hypothesis $Z(\frac12)$ (where we may take $\delta=1$ for any $\eps>0$) is
a consequence of the Bombieri-Vinogradov Theorem
\cite[Chapter 28]{Da},
improving upon an earlier result of Barban \cite{Barban},
who showed Hypothesis $Z(\frac38)$, again with $\delta=1$ for any $\eps>0$.
The best known result is $Z(\frac12+\frac{1}{42})$, due to
Maynard \cite[Corollary 1.2]{M}, which uses ideas originating from the
celebrated work of Zhang \cite{Zhang} and further developed by
the PolyMath8a project \cite{polymath8a}.
We note that the estimates of Zhang, PolyMath8b as well as new work of 
Stadlmann \cite{Sta} establish versions of Hypothesis $Z(\gamma)$
with $\gamma>\frac12$ but with the sum restricted to squarefree $m$.
It is conjectured that $Z(1)$ holds; indeed, this is a simple consequence of
the Elliott-Halberstam conjecture.

\begin{thm}\label{thm:main}
Assume Hypothesis $Z(\gamma)$.
Fix $a\ne 0$, $A>0$ and $0<\alpha < \gamma$.  Then, for $2\le y\le x$ we have
\[
d_{TV}(\bW_y,\bU_{x,y}) \ll_{a,A,\alpha} e^{-\alpha u\log u}  + \frac{1}{(\log x)^A},
\quad u=\frac{\log x}{\log y},
\]
the implied constant in the $\ll-$symbol depending only on $a,A$ and $\alpha$.
\end{thm}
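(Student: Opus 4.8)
The plan is to reduce the total variation distance to a sum over $y$-smooth moduli and then apply Hypothesis $Z(\gamma)$. First I would unwind the definition of $d_{TV}$. A subset $A$ of the support of $\bU_{x,y}$ is determined by a set $S$ of admissible exponent vectors $(u_q:q\le y)$; the event $\bW_y\in A$ (resp. $\bU_{x,y}\in A$) has probability computable from the independent distributions \eqref{Wq} (resp. from counting primes $p\in(|a|+1,x]$ with prescribed $q$-adic valuations of $p+a$). Since specifying $u_q$ for all $q\le y$ is the same as specifying, for each $q\le y$, a finite exponent (with all but finitely many relevant), I would instead parametrize by the integer $d=\prod_{q\le y} q^{u_q}$ and the "exactly divides" data: for each squarefull-free bookkeeping, the condition is $d\mid(p+a)$ and $P^-((p+a)/d)>y$ up to the finitely many primes dividing $a$. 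Thus the joint law of $\bU_{x,y}$ is governed by the sieve quantities $\pi(x;d,-a)$ with $d$ ranging over $y$-smooth integers coprime to $a$, and the corresponding idealized probabilities are the products $\prod 1/\phi(q^{u_q})$ times a Buchstab-type factor for the surviving part.

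Second, the main estimate: writing the probability of any cylinder event as a finite inclusion–exclusion (Legendre/Buchstab) over $y$-smooth $d$, the difference between the true probability and the idealized one is bounded by
\[
\frac{1}{\pi(x)}\ssum{d\le D \\ P^+(d)\le y \\ (d,a)=1} \Bigl| \pi(x;d,-a) - \frac{\pi(x)}{\phi(d)} \Bigr|
\]
plus a tail term coming from $d>D$, where $D=x^{\alpha}$ say (any $D\le x^{\gamma-\eps}$). The first sum is exactly what Hypothesis $Z(\gamma)$ controls: choosing $\delta$ from $Z(\gamma)$ for a suitable $\eps$ (with $\alpha<\gamma-\eps$ and $y\le x^{\delta}$ in the relevant range), it is $\ll_{B,a} (\log x)^{-B}$, which absorbs into the $(\log x)^{-A}$ term after taking $B=A+1$. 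For the tail $d>D$ one has two subcases. If $y\le x^{\delta}$ is genuinely small (so $u=\log x/\log y$ is large), the number of $y$-smooth $d\in(D, x]$ dividing some $p+a\le x+|a|$ is controlled by the standard smooth-number/Buchstab bound: the probability (in either model) that $p+a$ has a $y$-smooth divisor exceeding $x^{\alpha}$ is $\ll \rho(\alpha u)$-type, i.e. $\ll e^{-\alpha u\log u + O(u)}$, which after adjusting constants gives the stated $e^{-\alpha u\log u}$. If instead $y$ is not small — i.e. $u$ is bounded, so $x^{\alpha} \le y^{O(1)}$ and the cylinder involves only $O(1)$ primes up to size essentially $x^{\alpha}$ — then $D$ can be taken as a fixed power of $y$ and the tail is empty or again estimated trivially; in all cases the bound $e^{-\alpha u\log u}+(\log x)^{-A}$ holds since when $u=O(1)$ the first term is $\gg 1$.

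Third, I would assemble these: taking the supremum over cylinder events $A$, every such event's probability discrepancy is bounded by the (event-independent) sum above plus the tail, because inclusion–exclusion expresses $\PR(\bU_{x,y}\in A)$ as a signed sum of $\pi(x;d,-a)/\pi(x)$ over $y$-smooth $d$ with $d$ up to the truncation, with remainder absorbed by the smooth-tail estimate, and the same signed sum with $\pi(x)/\phi(d)$ replacing $\pi(x;d,-a)$ gives $\PR(\bW_y\in A)$ up to the idealized smooth tail (which obeys the same Buchstab bound, using $\PR(W_q=v)\asymp q^{-v}$). Hence $d_{TV}(\bW_y,\bU_{x,y})$ is at most the two displayed quantities, proving the theorem.

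The main obstacle is the truncation/tail step: one must show that restricting the inclusion–exclusion to $y$-smooth moduli $d\le x^{\alpha}$ introduces an error no larger than $e^{-\alpha u\log u}$ in \emph{both} models simultaneously and uniformly over all cylinder events. On the arithmetic side this is a fundamental-lemma-of-the-sieve / smooth-divisor count (primes $p\le x$ with $p+a$ having a $y$-smooth divisor in $(x^{\alpha},x]$), and on the idealized side it is the matching statement that $\sum_{d>x^{\alpha},\,P^+(d)\le y} 1/\phi(d)$, suitably weighted by a Buchstab factor, is small; the two must be matched so that the event-dependent parts cancel and only the $Z(\gamma)$ sum and the common tail survive. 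Getting the exponent $\alpha u\log u$ rather than a weaker $\alpha u$ requires the sharp Dickman/Buchstab asymptotics $\rho(v)=e^{-v\log v+O(v)}$, and care is needed to ensure the $O(v)$ loss is harmless because $\alpha<\gamma$ leaves room to lower $\alpha$ slightly inside the argument.
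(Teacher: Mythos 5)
Your plan --- parametrize outcomes by the $y$-smooth part $m$ of $p+a$, apply the Fundamental Lemma of the sieve plus Hypothesis $Z(\gamma)$ in the main range, and handle large $m$ with a Dickman-style tail --- matches the paper's approach in its essentials; the sieve quantity you describe is exactly the paper's $\Phi_m(x,y)=\#\{p\le x: m\mid(p+a),\ P^-((p+a)/m)>y\}$. The paper, however, begins from the $\ell^1$ identity (Lemma~\ref{dTV-eq}), namely $d_{TV}(\bW_y,\bU_{x,y})=\tfrac12\sum_{P^+(m)\le y}\bigl|\Phi_m(x,y)/\pi^*(x)-g_y(m)\bigr|$, and then bounds each term; you work with the supremum definition and try to establish an event-independent bound for every cylinder event. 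That route is workable in principle but it is precisely where your sketch develops a real gap.

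The gap is your claim that the discrepancy of an arbitrary cylinder event is bounded by $\tfrac{1}{\pi(x)}\sum_{d\le D}\bigl|\pi(x;d,-a)-\pi(x)/\phi(d)\bigr|$. When you expand $\PR(\bU_{x,y}\in A)=\sum_{m\in A}\Phi_m/\pi^*(x)$ and open each $\Phi_m$ by Buchstab/Legendre, a modulus $s$ occurs once for each divisor $m\mid s$ lying in $A$; in the worst case the coefficient of $\bigl|\pi(x;s,-a)-\pi(x)/\phi(s)\bigr|$ is $\tau(s)$, not $1$, so your displayed bound does not control the supremum over events. The paper deals with this by a Cauchy--Schwarz step that absorbs the divisor weight (Lemma~\ref{lem:Eset}, where $B=4A+10$ is chosen in the application of $Z(\gamma)$), coupled with the exceptional-set device $\cE(x,y;\theta,A)$ isolating the moduli for which the local sieve remainder is abnormally large. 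Some version of this is essential and is missing from your argument, though it is fixable since $Z(\gamma)$ saves an arbitrary power of $\log x$. A second, lesser issue is the tail: asserting that shifted primes whose $y$-smooth part exceeds $x^{\alpha}$ have density ``$\rho(\alpha u)$-type'' conflates two distinct estimates the paper actually uses --- an upper-bound sieve for $\Phi_m$ together with the smooth-sum estimate of Lemma~\ref{smooth} in the range $x^{\theta}\le m\le (x+a)/y$, and a theorem of Pomerance and Shparlinski bounding $\#\{p\le x:P^+(p+a)\le y\}$ in the range $m>(x+a)/y$, where $p+a$ is itself $y$-smooth. Your instinct that $\alpha<\gamma$ leaves slack to absorb $O(u)$ losses is correct, but a bound on the count of shifted primes with a large $y$-smooth part is not a direct application of $\Psi(x,y)$ bounds and requires these additional inputs.
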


In plain language, Theorem \ref{thm:main} implies that the distribution
of prime factors
of shifted primes which are below $y=x^{o(1)}$ is uniformly well-approximated 
by the vector of random variables $\bW_y$.

The distribution of the large prime factors of shifted primes is
not well understood at present.  It is known that for
infinitely many primes $p$, $P^+(p+a) \le p^{0.2844}$ \cite{Li}
and for infinitely many primes $p$,  $P^+(p+a) \ge p^{0.677}$
\cite{BH98}.  It is conjectured that analogous statements hold with 
$0.2844$ replaced by any positive number, and with $0.677$
replaced by any number $<1$.  We do not address these problems in
this paper.

The additive term $\frac{1}{(\log x)^A}$ appearing in Theorem \ref{thm:main}
arises from the hypothetical
existence of a modulus $q\le (\log x)^{A}$ for which the primes
below $x$ have irregular distribution in progressions modulo $q$, which comes from the
hypothetical existence of a real zero of a Dirichlet $L$-function for a real character 
modulo $q$ which is very close to 1.
Even if $A=A(x)$ is a function tending to $\infty$ arbitrarily slowly,
we cannot at present rule out the existence of such a $q$; see \cite[\S 20--22]{Da} 
for details.
Under suitable assumptions on the real zeros of Dirichlet $L$-functions
attached to real characters,
one can replace the term $1/(\log x)^A$ with a smaller function.
We leave the details to the interested reader.

\subsection{Poisson approximation of prime factors of shifted primes}

For any finite set $T$ of primes, let
\[
\omega(n,T) = \# \{p|n:p\in T\}=\#\{p\in T: v_p>0\},
\qquad \Omega(n,T)=\sum_{p\in T} v_p.
\]
By Theorem \ref{thm:main}, the distribution of  $\omega(p+a,T)$ should be well-approximated by the distribution of the random variable
\be\label{RT}
R_T := \# \{q \in T : W_q \ge 1 \}
\ee
and $\Omega(p+a,T)$ should be 
well-approximated by the distribution of 
\be\label{RTT}
\widetilde{R}_T := \sum_{q\in T} W_q,
\ee
 provided that $\frac{\log y}{\log x}$ is small.

For prime $q$,
\[
\PR(W_q \ge 1) = \frac1{q-1}, \qquad \E W_q = \frac{q}{(q-1)^2}.
\]
Thus we expect that $R_T$ is well-approximated by a Poisson
variable with parameter $H_1(T)$, and $\widetilde{R}_T$ is
well-approximated by a Poisson
variable with parameter $H_1'(T)$, where
\be\label{H1}
H_1(T) := \sum_{q\in T} \frac{1}{q-1}, \qquad
H_1'(T) := \sum_{q\in T} \frac{q}{(q-1)^2}.
\ee
Also define
\[
H_2(T) = \sum_{q\in T} \frac{1}{q^2}.
\]

\begin{thm}\label{thm2}
Assume Hypothesis $Z(\gamma)$.
Fix $a\ne 0$, $A>0$ and $0<\alpha < \gamma$.
Let $2\le y\le x$ and suppose that
 $T_1,\ldots,T_m$ are disjoint nonempty sets of primes in $[2,y]$.
 For each $1\le i\le m$, suppose that either
 $f_i=\omega(p+a,T_i)$ and $\lambda_i = H_1(T_i)$ or that
 $f_i=\Omega(p+a,T_i)$ and $\lambda_i = H_1'(T_i)$.
Assume moreover that 
for each $i$, $Z_i$ is a Poisson random variable with parameter $\lambda_i$ and that
$Z_1,\ldots,Z_m$ are independent.
Then
\[
d_{TV} \Big( (f_1,\ldots,f_m), (Z_1,\ldots,Z_m) \Big) \ll_{a,A,\alpha}  \sum_{j=1}^m \frac{H_2(T_j)}{1+H_1(T_j)} + \er^{-\alpha u\log u}+\frac{1}{(\log x)^A}, \quad u=\frac{\log x}{\log y}.
\]
\end{thm}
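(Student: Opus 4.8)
The plan is to deduce Theorem~\ref{thm2} from Theorem~\ref{thm:main} together with the Chen--Stein method of Poisson approximation, via two elementary reductions. Since each $f_i$ depends only on the coordinates $(u_q)_{q\in T_i}$ and all these primes lie in $[2,y]$, there is a single deterministic map $\Phi$ with
\[
\Phi(\bv)_i = \#\{q\in T_i:\ v_q\ge 1\}\quad(\omega\text{-case}),\qquad \Phi(\bv)_i = \sum_{q\in T_i} v_q\quad(\Omega\text{-case}),
\]
such that $(f_1,\dots,f_m)=\Phi(\bU_{x,y})$; applying $\Phi$ to the model vector gives $\Phi(\bW_y)=(g_1,\dots,g_m)$, where $g_i=R_{T_i}$ in the $\omega$-case and $g_i=\widetilde{R}_{T_i}$ in the $\Omega$-case, cf.\ \eqref{RT} and \eqref{RTT}. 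As $d_{TV}$ cannot increase under a deterministic map (immediate from \eqref{dTV-def}), Theorem~\ref{thm:main} gives
\[
d_{TV}\big((f_1,\dots,f_m),(g_1,\dots,g_m)\big) \le d_{TV}(\bW_y,\bU_{x,y}) \ll_{a,A,\alpha} \er^{-\alpha u\log u}+\frac{1}{(\log x)^A},
\]
so by the triangle inequality it remains to bound $d_{TV}\big((g_1,\dots,g_m),(Z_1,\dots,Z_m)\big)$.

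Since the $T_i$ are pairwise disjoint and the $W_q$ are mutually independent, the blocks $(W_q)_{q\in T_i}$, $1\le i\le m$, are independent; hence $g_1,\dots,g_m$ are independent, the $Z_i$ are independent by hypothesis, and subadditivity of total variation distance over independent coordinates yields
\[
d_{TV}\big((g_1,\dots,g_m),(Z_1,\dots,Z_m)\big) \le \sum_{i=1}^m d_{TV}(g_i,Z_i).
\]
This reduces matters to a one-dimensional Poisson approximation for each $i$, which is where the term $\sum_j H_2(T_j)/(1+H_1(T_j))$ comes from.

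For each $i$, $g_i$ is a sum of independent $\NN_0$-valued summands whose total mean is exactly $\lambda_i$ ($R_{T_i}$ is a sum of independent indicators with success probabilities $\tfrac1{q-1}$, $q\in T_i$, of total mean $H_1(T_i)$; $\widetilde R_{T_i}$ is the sum of the $W_q$, $q\in T_i$, of total mean $H_1'(T_i)$). The Chen--Stein bound for sums of independent integer-valued summands (for indicators this is the classical Le Cam/Chen--Stein inequality; see Barbour--Holst--Janson) gives in both cases
\[
d_{TV}(g_i,Z_i) \le \min\!\Big(1,\tfrac1{\lambda_i}\Big)\sum_{q\in T_i}\Big( (\E Y_{q})^2 + \E\big[Y_{q}(Y_{q}-1)\big]\Big),
\]
where $Y_q$ denotes the $q$-th summand of $g_i$. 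In the $\omega$-case $\E[Y_q(Y_q-1)]=0$ and $(\E Y_q)^2=\tfrac1{(q-1)^2}$; in the $\Omega$-case a direct computation gives $\E W_q=\tfrac{q}{(q-1)^2}$ and $\E[W_q(W_q-1)]=\tfrac{2q}{(q-1)^3}$. In either case each summand is $\ll q^{-2}$, while $\lambda_i\ge H_1(T_i)$, so using $\min(1,\lambda^{-1})\le 2/(1+\lambda)$ we obtain $d_{TV}(g_i,Z_i)\ll H_2(T_i)/(1+H_1(T_i))$. Summing over $i$ and combining with the transfer estimate from the first paragraph via the triangle inequality completes the proof.

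I expect no genuinely new difficulty beyond Theorem~\ref{thm:main}; the points that need care are: (i) using the \emph{sharp} Poisson approximation bound, with the smoothing factor $\min(1,\lambda_i^{-1})$, since it is precisely this factor that produces the denominator $1+H_1(T_j)$ (the crude term-by-term comparison of each $W_q$ with a Poisson would only give $\ll\sum_j H_2(T_j)$, which is insufficient when $H_1(T_j)$ is large); and (ii) in the $\Omega$-case, invoking or re-deriving the integer-valued version of the Chen--Stein estimate from the Stein equation $\lambda f(k+1)-k f(k)=\cdots$ and checking the elementary inequalities $\E[W_q(W_q-1)]\ll q^{-2}$, $(\E W_q)^2\ll q^{-2}$, $H_1'(T_i)\ge H_1(T_i)$, and $q-1\ge q/2$.
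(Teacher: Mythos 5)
Your proposal is correct and follows the same overall architecture as the paper: project down (data-processing inequality) to transfer Theorem~\ref{thm:main} to the vector $(f_1,\dots,f_m)$ versus $(R_{T_1},\dots)$ (resp.\ $\widetilde R_{T_i}$), use Lemma~\ref{dTV-sum} to split the model-vs-Poisson comparison into one coordinate at a time, establish a one-dimensional bound $d_{TV}(R_{T_i},Z_i)\ll H_2(T_i)/(1+H_1(T_i))$ (and likewise for $\widetilde R_{T_i}$), and finish with the triangle inequality. The only place you diverge from the paper is in how that one-dimensional bound (Lemma~\ref{dTV-cor}) is obtained: the paper computes the factorial-moment generating functions $\E z^{R_T}$, $\E z^{\widetilde R_T}$ and invokes the argument of [FPoisson, Theorems 5--6], whereas you re-derive it directly via the Chen--Stein method for sums of independent $\NN_0$-valued summands, with the crucial solution estimate $\|\Delta f_A\|\le\min(1,\lambda^{-1})$ producing the denominator $1+H_1(T_j)$. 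Your Stein bound
\[
d_{TV}\Big(\sum_i X_i,\,\mathrm{Po}(\lambda)\Big)\le \min(1,\lambda^{-1})\sum_i\Big((\E X_i)^2+\E[X_i(X_i-1)]\Big),\qquad \lambda=\sum_i\E X_i,
\]
is correct (one verifies it by writing $\E[\lambda f(W{+}1)-Wf(W)]=\sum_i\E\big[X_i\,(f(W_i{+}1)-f(W_i{+}X_i))\big]+\sum_i\E X_i\,\E[f(W_i{+}X_i{+}1)-f(W_i{+}1)]$ with $W_i=W-X_i$ and bounding each increment by $\|\Delta f\|$), and your term-by-term estimates $(\E W_q)^2=q^2/(q-1)^4\ll q^{-2}$, $\E[W_q(W_q-1)]=2q/(q-1)^3\ll q^{-2}$, $H_1'(T_i)\ge H_1(T_i)$ all check out. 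The net effect is that your proof is self-contained where the paper defers to an external reference; otherwise the two are the same argument.
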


The estimate in Theorem \ref{thm2} is uniform in $m$ and in $T_1,\ldots,T_m$.

The analogous statement for the distribution of prime factors
of all integers $n\le x$ has been given by the author \cite{FPoisson}.
Provided that the right side in the conclusion is sufficiently small, this reduces problems about
 the distribution of $(f_1,\ldots,f_m)$ to the analogous problems about the distribution of 
$(Z_1,\ldots,Z_m)$, the latter being much easier especially due to the independence
of $Z_1,\ldots,Z_m$.

As the parameter $\lambda$ of a Poisson random variable approaches $\infty$,
it converges weakly to a normal random variable with mean and variance $\lambda$.
 Also, $H_1(T) \le H_1'(T)$ for any $T$.
Hence, we have the following corollary. 

\begin{cor}
Suppose that $T_1,\ldots,T_m$ are disjoint subsets of primes in $[2,y]$, which may be functions of $x$ (with $m$ a function of $x$ also).

(a) If $u=u(x) \to \infty$ as $x\to \infty$ and $\sum H_2(T_i)/(1+H_1(T_i))\to 0$ as $x\to\infty$, then uniformly over all
subsets $\sR$ of $\NN_0^m$ we have
\[
\Big| \PR( (f_1,\ldots,f_m)\in \sR ) - \PR( (Z_1,\ldots,Z_m)\in \sR ) \Big| = o(1) \quad 
(x\to \infty).
\]

(b) In addition, if $\min_i H_1(T_i)\to \infty$ as $x\to\infty$,
then $(f_1,\ldots,f_m)$ converges weakly to $(N_1,\ldots,N_m)$
as $x\to\infty$,
where $N_1,\ldots,N_m$ are independent Gaussian normal random
variables, and for each $i$, $N_i$ has mean and variance
$\lambda_i$.
\end{cor}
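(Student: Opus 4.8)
The plan is to deduce both parts directly from Theorem~\ref{thm2}: there is no genuinely new mathematics here, only a routine limiting argument on top of the estimate already in hand. The one point requiring a little care is expository rather than substantive --- fixing the meaning of ``converges weakly'' in part (b) when the limiting law $(N_1,\ldots,N_m)$ itself drifts with $x$ (and, if $m=m(x)\to\infty$, changes dimension) --- and the key observation throughout is that the estimate of Theorem~\ref{thm2} is uniform in $m$ and in $T_1,\ldots,T_m$, which is exactly what licenses transferring conclusions between $(f_1,\ldots,f_m)$ and $(Z_1,\ldots,Z_m)$.

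For part (a) I would first discard any empty $T_i$: there $f_i\equiv 0$, while $\lambda_i=0$ makes $Z_i$ a point mass at $0$, so such coordinates are deterministic and identical on both sides and affect neither the total variation distance nor the sum $\sum_j H_2(T_j)/(1+H_1(T_j))$. With all $T_i$ nonempty, apply Theorem~\ref{thm2} with $A$ the given constant (or simply $A=1$) and with any fixed $\alpha\in(0,\gamma)$, which exists since $\gamma>0$. By the definition \eqref{dTV-def}, $d_{TV}\bigl((f_i)_{i\le m},(Z_i)_{i\le m}\bigr)=\sup_{\sR\subseteq\NN_0^m}\bigl|\PR((f_i)\in\sR)-\PR((Z_i)\in\sR)\bigr|$, and Theorem~\ref{thm2} bounds this by $\sum_j H_2(T_j)/(1+H_1(T_j))+\er^{-\alpha u\log u}+(\log x)^{-A}$ with an implied constant depending only on the now-fixed $a,A,\alpha$. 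As $x\to\infty$ the first term is $o(1)$ by hypothesis, the second because $u\to\infty$ forces $u\log u\to\infty$, and the third because $A>0$ is fixed; this is precisely part (a).

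For part (b) I would bootstrap from part (a). Since $H_1(T)\le H_1'(T)$, in either allowed regime $\lambda_i\ge H_1(T_i)$, so the added hypothesis forces $\min_i\lambda_i\to\infty$; the conclusion of (b) is to be read as: the standardized vector $\bigl((f_i-\lambda_i)/\sqrt{\lambda_i}\bigr)_{i\le m}$ converges weakly to $(G_1,\ldots,G_m)$, the $G_i$ being i.i.d.\ standard normals (so $N_i:=\lambda_i+\sqrt{\lambda_i}\,G_i$ has mean and variance $\lambda_i$). By the classical central limit theorem for the Poisson law --- expand the characteristic function, $\E\,\er^{it(Z-\lambda)/\sqrt{\lambda}}=\exp\bigl(\lambda(\er^{it/\sqrt{\lambda}}-1-it/\sqrt{\lambda})\bigr)\to\er^{-t^2/2}$ --- together with the independence of $Z_1,\ldots,Z_m$, the standardized Poisson vector $\bigl((Z_i-\lambda_i)/\sqrt{\lambda_i}\bigr)_{i\le m}$ converges weakly to $(G_1,\ldots,G_m)$. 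To pass from $Z$ to $f$, fix a box $S'=\prod_{i\le m}(c_i,d_i]\subseteq\RR^m$: the event $\{((f_i-\lambda_i)/\sqrt{\lambda_i})_i\in S'\}$ equals $\{(f_i)_i\in\sR\}$ for the subset $\sR=\NN_0^m\cap\prod_i(\lambda_i+c_i\sqrt{\lambda_i},\,\lambda_i+d_i\sqrt{\lambda_i}]$, and the same holds with $f$ replaced by $Z$, so part (a) gives
\[
\PR\Bigl(\tfrac{f_i-\lambda_i}{\sqrt{\lambda_i}}\in(c_i,d_i]\ \forall i\Bigr)=\PR\Bigl(\tfrac{Z_i-\lambda_i}{\sqrt{\lambda_i}}\in(c_i,d_i]\ \forall i\Bigr)+o(1)\longrightarrow\prod_{i\le m}\bigl(\Phi(d_i)-\Phi(c_i)\bigr).
\]
Since every box is a continuity set for the standard normal, the portmanteau theorem yields the claimed weak convergence; and when $m=m(x)\to\infty$ one reads the statement through any fixed finite set of coordinates, the argument being verbatim the same.
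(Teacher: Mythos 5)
Your proposal is correct and takes essentially the approach the paper intends: part (a) is immediate from the uniformity and the three terms in the bound of Theorem~\ref{thm2} each being $o(1)$, and part (b) combines (a) with the classical Poisson-to-Gaussian limit and the independence of the $Z_i$. The paper states the corollary without a written proof (relying on the remarks that a Poisson variable with large parameter is approximately normal and that $H_1\le H_1'$); your write-up supplies exactly those missing details, including the small but genuine points that empty $T_i$ can be discarded before invoking Theorem~\ref{thm2} (which requires nonempty sets), and that weak convergence with $m=m(x)\to\infty$ should be read through finite-dimensional marginals.
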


The special case of (b), where $m=1$ and $T_1$ is the set of all primes in $[2,x]$
(not strictly covered by the Corollary, but easily deducible from it) was
first shown by Halberstam \cite{Hal56}.

Finally, we use Theorem \ref{thm2} to prove a general \emph{transference principle},
whereby a property of the prime factors of a random $n\le x$ automatically
holds for the prime factors of a random shifted prime $p+a$, $p\le x$.

\begin{thm}\label{thm:transference}
Assume Hypothesis $Z(\gamma)$.
Fix $a\ne 0$, $A>0$ and $0<\alpha < \gamma$.
Let $2\le y\le x$ and suppose that
 $T_1,\ldots,T_m$ are disjoint nonempty sets of primes in $[2,y]$.
 For each $1\le i\le m$, suppose that either
 $f_i(n)=\omega(n,T_i)$  or that
 $f_i(n)=\Omega(n,T_i)$.  Then, for any subset 
 $\sR \subset \NN_0^m$ we have
 \begin{multline*}
 \Big| \PR \big( (f_1(n),\ldots,f_m(n))\in \sR \big) - 
 \PR \big( (f_1(p+a),\ldots,f_m(p+a))\in \sR \big) \Big|
 \ll_{a,A,\alpha}  \sum_{j=1}^m \frac{H_2(T_j)}{1+\sqrt{H_1(T_j)}} +\\
 + \er^{-\alpha u\log u}+\frac{1}{(\log x)^A}, \quad u=\frac{\log x}{\log y}.
 \end{multline*}
\end{thm}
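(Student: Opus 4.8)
The plan is to reduce both $(f_1(p+a),\ldots,f_m(p+a))$ and $(f_1(n),\ldots,f_m(n))$ to independent Poisson vectors, and then compare the two Poisson vectors. For the shifted-prime side this is exactly Theorem~\ref{thm2}: taking $Z_i$ to be independent Poisson random variables of parameter $\lambda_i$, where $\lambda_i=H_1(T_i)$ if $f_i=\omega(\cdot,T_i)$ and $\lambda_i=H_1'(T_i)$ if $f_i=\Omega(\cdot,T_i)$, it gives
\[
d_{TV}\big((f_1(p+a),\ldots,f_m(p+a)),(Z_1,\ldots,Z_m)\big)\ll_{a,A,\alpha}\sum_{j=1}^m\frac{H_2(T_j)}{1+H_1(T_j)}+\er^{-\alpha u\log u}+\frac1{(\log x)^A}.
\]
For the integer side I would invoke the analogous Poisson approximation for the prime factors of a uniformly random $n\le x$, established by the author in \cite{FPoisson} (for the $\omega$-statistics one may instead combine Theorem~A with Le~Cam's inequality applied coordinatewise to the independent Kubilius variables $X_q$, $q\in T_i$): this produces independent Poisson variables $Z_1',\ldots,Z_m'$, where $Z_i'$ has parameter $\mu_i$ equal to $\sum_{q\in T_i}\frac1q$ in the $\omega$-case and $\sum_{q\in T_i}\frac1{q-1}=H_1(T_i)$ in the $\Omega$-case, together with
\[
d_{TV}\big((f_1(n),\ldots,f_m(n)),(Z_1',\ldots,Z_m')\big)\ll\sum_{j=1}^m\frac{H_2(T_j)}{1+H_1(T_j)}+\er^{-cu\log u}
\]
for an absolute $c>0$. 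Any other natural normalization of the $\mu_i$ differs from the above by $O(H_2(T_i))$, which will be harmless; and since $0<\alpha<\gamma\le1$ the term $\er^{-cu\log u}$ is $\ll\er^{-\alpha u\log u}$. By the triangle inequality for $d_{TV}$ it then remains to bound $d_{TV}\big((Z_1,\ldots,Z_m),(Z_1',\ldots,Z_m')\big)$ by $\sum_{j=1}^m H_2(T_j)/(1+\sqrt{H_1(T_j)})$.

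Since $Z_1,\ldots,Z_m$ are mutually independent and $Z_1',\ldots,Z_m'$ are mutually independent, coupling each coordinate optimally and independently gives $d_{TV}\big((Z_i)_i,(Z_i')_i\big)\le\sum_{i=1}^m d_{TV}(Z_i,Z_i')$, so I would fix $i$ and bound $d_{TV}(\mathrm{Po}(\mu_i),\mathrm{Po}(\lambda_i))$. Here $\mu_i\le\lambda_i$, and the elementary inequalities $\frac1{q-1}\le\frac2q$, $\frac q{(q-1)^2}\le\frac2{q-1}$ give $\lambda_i\le2\mu_i$, so $\min(\lambda_i,\mu_i)=\mu_i\asymp H_1(T_i)$; a direct computation gives $\lambda_i-\mu_i=\sum_{q\in T_i}\frac1{q(q-1)}$ in the $\omega$-case and $\lambda_i-\mu_i=\sum_{q\in T_i}\frac1{(q-1)^2}$ in the $\Omega$-case, so $\lambda_i-\mu_i\ll H_2(T_i)$ in both. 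Then I would apply the classical estimate
\[
d_{TV}\big(\mathrm{Po}(\mu),\mathrm{Po}(\lambda)\big)\ll\frac{|\lambda-\mu|}{1+\sqrt{\min(\lambda,\mu)}},
\]
which follows by writing $\mathrm{Po}(\lambda)\deq\mathrm{Po}(\mu)+\mathrm{Po}(\lambda-\mu)$, noting that a deterministic shift of $\mathrm{Po}(\mu)$ by $k$ costs $\ll\min(1,k/\sqrt\mu)$ in total variation, and using concavity of $t\mapsto\min(1,t)$. This yields $d_{TV}(Z_i,Z_i')\ll H_2(T_i)/(1+\sqrt{H_1(T_i)})$; summing over $i$, assembling the three estimates above, and using $\frac{H_2(T_j)}{1+H_1(T_j)}\ll\frac{H_2(T_j)}{1+\sqrt{H_1(T_j)}}$, completes the proof.

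The heart of the matter — and the only step that is more than bookkeeping — is the square-root saving $\frac1{1+\sqrt{H_1(T_j)}}$: a crude coupling of $Z_i$ to $Z_i'$ (for instance realizing $Z_i\deq Z_i'+\mathrm{Po}(\lambda_i-\mu_i)$ and bounding by $\mathbb P(\mathrm{Po}(\lambda_i-\mu_i)\ne0)$) only yields the much weaker $O(H_2(T_i))$, and the improvement comes entirely from the smoothing effect of the large Poisson noise — two Poisson laws of size $\asymp H_1$ whose parameters differ by $H_2$ are only $\asymp H_2/\sqrt{H_1}$ apart, and this is sharp. That sharpness is exactly why the error term in this transference principle is legitimately larger than the one in Theorem~\ref{thm2}. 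A secondary point to verify carefully is that the integer-side input from \cite{FPoisson} genuinely comes with an error dominated by $\er^{-\alpha u\log u}+(\log x)^{-A}$ in the present ranges of the parameters, and that the Poisson parameters on the two sides match up to the harmless $O(H_2(T_j))$ discrepancies; both are routine given $0<\alpha<\gamma\le1$.
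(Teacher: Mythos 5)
Your proof follows essentially the same route as the paper's: the triangle inequality through two independent Poisson vectors, using Theorem~\ref{thm2} for the shifted-prime side and Theorem~1 of \cite{FPoisson} for the integer side, then Lemmas~\ref{dTV-sum} and~\ref{dTV-Poisson} for the Poisson-vs-Poisson comparison. Your shift-smoothing coupling derivation of the Poisson total-variation bound is a correct alternative to the paper's Pinsker/Kullback--Leibler argument for Lemma~\ref{dTV-Poisson}, but this is a minor variant and the overall structure is identical.
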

The first probability is over a random integer $n\le x$ and the second is over
a random prime $p$ satisfying $|a| + 1 <p\le x$.

\begin{cor}
Assume the hypotheses of Theorem \ref{thm:transference}, that $y\le x^{o(1)}$ (as $x\to \infty$)
and either (i) $\min_j H_1(T_j)\to\infty$ as $x\to\infty$ or (ii) $\max_j H_2(T_j)=o(1)$
as $x\to\infty$.  Then, uniformly over all  $\sR \subset \NN_0^m$ we have
\[
\Big|  \PR \big( (f_1(n),\ldots,f_m(n))\in \sR \big) - 
  \PR \big( (f_1(p+a),\ldots,f_m(p+a))\in \sR \big) \Big| = o(1) \qquad (x\to\infty).
\]
\end{cor}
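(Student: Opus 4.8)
The plan is to derive the Corollary as an immediate consequence of Theorem~\ref{thm:transference}: since the right-hand side of the bound there does not depend on $\sR$, it suffices to show that each of its three summands is $o(1)$ as $x\to\infty$ under the stated hypotheses, after which the left-hand side is $o(1)$ uniformly in $\sR$.

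Two of the three summands are disposed of immediately. The term $1/(\log x)^A$ tends to $0$ because $A>0$ is fixed. For the term $\er^{-\alpha u\log u}$, the assumption $y\le x^{o(1)}$ lets us write $y\le x^{\eta(x)}$ with $\eta(x)\to 0$, so that $u=\log x/\log y\ge 1/\eta(x)\to\infty$; since $\alpha>0$ is fixed, $u\log u\to\infty$ and hence $\er^{-\alpha u\log u}\to 0$.

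It remains to treat $\Sigma:=\sum_{j=1}^m H_2(T_j)/(1+\sqrt{H_1(T_j)})$. Since every denominator is at least $1$ and the $T_j$ are pairwise disjoint, $\Sigma\le\sum_{j=1}^m H_2(T_j)=\sum_{q\in T_1\cup\cdots\cup T_m}q^{-2}\le\sum_q q^{-2}<\infty$. In case (i), with $\mu:=\min_j H_1(T_j)\to\infty$, every denominator occurring in $\Sigma$ is at least $\sqrt{\mu}$, so $\Sigma\le\mu^{-1/2}\sum_{j=1}^m H_2(T_j)\ll\mu^{-1/2}=o(1)$. In case (ii), set $M:=\max_j H_2(T_j)=o(1)$: each prime $q$ appearing in $T_1\cup\cdots\cup T_m$ lies in some $T_j$, hence $q^{-2}\le H_2(T_j)\le M$, i.e.\ $q\ge M^{-1/2}$, and therefore $\Sigma\le\sum_{q\in T_1\cup\cdots\cup T_m}q^{-2}\le\sum_{n\ge M^{-1/2}}n^{-2}\ll M^{1/2}=o(1)$. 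In either case $\Sigma=o(1)$, and combining this with the preceding paragraph and Theorem~\ref{thm:transference} gives the Corollary.

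There is no substantive obstacle here; the one point needing a little care is case (ii), where one must use the disjointness of the $T_j$ to rewrite $\sum_j H_2(T_j)$ as a sum of $q^{-2}$ over the union of the $T_j$, and then observe that $\max_j H_2(T_j)\to 0$ forces every prime in that union to exceed $M^{-1/2}\to\infty$ --- the uniform bound $\sum_j H_2(T_j)=O(1)$ on its own is not small enough.
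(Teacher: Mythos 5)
Your proof is correct and follows essentially the same route as the paper: both cases bound $\sum_j H_2(T_j)/(1+\sqrt{H_1(T_j)})$ exactly as you do, with case (i) using $\sum_j H_2(T_j)=O(1)$ divided by $\sqrt{\min_j H_1(T_j)}\to\infty$, and case (ii) using disjointness to observe that $\max_j H_2(T_j)=o(1)$ forces the smallest prime in $\bigcup_j T_j$ to tend to infinity, so the tail sum $\sum_{q\ge t}q^{-2}$ is $o(1)$. The only difference is that you spell out why $\er^{-\alpha u\log u}$ and $(\log x)^{-A}$ vanish, which the paper leaves implicit.
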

\begin{proof}
In case (i) this follows from Theorem \ref{thm:transference}, since $\sum_j H_2(T_j)=O(1)$
uniformly for any  $T_1,\ldots,T_m$.  In case (ii), if $t=\min_j \min T_j$ then
$t\to\infty$ as $x\to \infty$.  Then $\sum_j H_2(T_j) \ll \sum_{p\ge t}1/p^2=o(1)$ as $x\to\infty$
and the claim follows from Theorem \ref{thm:transference}.
\end{proof}


\subsection{An application}

To illustrate the use of the transference principle, Theorem \ref{thm:transference},
we derive the law of the iterated logarithm for the random function $\omega(p+a,[2,t])$
of $t$ from the analogous result for the random function $\omega(n,[2,t])$.  For $t$
with $\log_4 t\ge 1$ and positive integer $n$, define
\[
\Lambda(n,t) := \frac{\omega(n,[2,t])-\log_2 t}{\sqrt{2\log_2 t \cdot \log_4 t}},
\]
where $\log_k$ denotes the $k$-th iterate of the logarithm.
Hall and Tenenbaum \cite[Theorem 11]{HT} showed that for any $\eps>0$
and any increasing function $\xi(x)$ satisfying $\xi(x)\to \infty$
as $x\to\infty$ and $\xi(x)\le \log x$ for all $x$, we have
\begin{equation}\label{eq:LIL-n}
\inf_{\xi(x)<t\le x} \Lambda(n,t) \in [-1-\eps,-1+\eps], \qquad 
\sup_{\xi(x)<t\le x} \Lambda(n,t) \in [1-\eps,1+\eps] 
\end{equation}
for all but $o(x)$ integers up to $x$, as $x\to\infty$.

\begin{thm}\label{thm:LIL}
Fix $a\ne 0$ and $\eps>0$.
Consider any increasing function $\xi(x)$ satisfying $\xi(x)\to \infty$
as $x\to\infty$ and $\xi(x)\le \log x$ for all $x$.  Then
\[
\inf_{\xi(x)<t\le x} \Lambda(p+a,t) \in [-1-\eps,-1+\eps], \qquad 
\sup_{\xi(x)<t\le x} \Lambda(p+a,t) \in [1-\eps,1+\eps] 
\]
for all but $o(x/\log x)$ primes $p$ in $(|a|+1,x]$, as $x\to\infty$.
\end{thm}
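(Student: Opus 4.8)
The idea is to transfer the law of the iterated logarithm for $\omega(n,[2,t])$ to shifted primes via Theorem \ref{thm:transference}, but one cannot apply that theorem naively with a single statistic over the full range $\xi(x)<t\le x$, since $H_1([2,x])$ diverges and the error term $\er^{-\alpha u\log u}$ is only useful when $t$ is not too large (i.e. $u=\log x/\log t$ bounded away from small values). The remedy is a two-scale decomposition. First fix a large parameter $K$ and split the range $[\xi(x),x]$ at $x^{1/K}$: on the ``low'' part $\xi(x)<t\le x^{1/K}$ the relevant set of primes $T=[2,t]$ lies in $[2,x^{1/K}]$, so Theorem \ref{thm:transference} applies with $y=x^{1/K}$ and $u\ge K$, giving a genuinely small error; on the ``high'' part $x^{1/K}<t\le x$ the extra primes contributed, namely those in $(x^{1/K},t]$, number $O(\log\log x)=o(\sqrt{\log_2 x\cdot\log_4 x})$, so they cannot move $\Lambda$ by more than $o(1)$ and it suffices to control $\Lambda(p+a,t)$ for $t$ in the single window $[x^{1/(K+1)},x^{1/K}]$, or really just at $t=x^{1/K}$ up to the same $o(1)$ slack. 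Thus the whole infimum/supremum over $\xi(x)<t\le x$ is, up to an additive $o(1)$, governed by the infimum/supremum over $\xi(x)<t\le x^{1/K}$.

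Second, to handle the infimum and supremum over the continuum of $t\in(\xi(x),x^{1/K}]$ simultaneously with a single application of the finite-dimensional estimate, I would discretize: choose a sequence of thresholds $t_0=\xi(x)<t_1<\cdots<t_m=x^{1/K}$ with, say, $t_{j+1}=t_j^{1+\eta}$ for small $\eta>0$ (so $m\asymp_\eta\log\log x$), set $T_j=(t_{j-1},t_j]$, and let $f_j=\omega(p+a,T_j)$. Then $\omega(p+a,[2,t_k])=\omega(p+a,[2,\xi(x)])+\sum_{j\le k}f_j$, and since $\omega(p+a,[2,\xi(x)])\ll\log\log\xi(x)=o(\sqrt{\log_2 x\cdot\log_4 x})$ by the trivial bound (number of prime divisors below $\xi(x)$ is at most $\log\xi(x)/\log 2$, say), the partial sums of the $f_j$ control $\Lambda(p+a,t_k)$ at all grid points up to $o(1)$; monotonicity of $\omega(p+a,[2,t])$ in $t$ then controls the intervening values, because moving from $t_{j-1}$ to $t_j$ changes $\omega$ by $f_j=O(\log(1/\eta)+\log\log\log x)$... more carefully, one needs $f_j=o(\sqrt{\log_2 x\cdot\log_4 x})$ uniformly, which holds since $\E f_j=H_1(T_j)\asymp\log\frac{\log t_j}{\log t_{j-1}}=O(\eta)+O(1)$ away from the extreme ends and a crude large-deviation bound (or even a second-moment bound via Theorem \ref{thm2}) shows $f_j$ exceeds $(\log\log x)^{1/4}$, say, for only $o(x/\log x)$ primes $p$, after a union bound over the $O(\log\log x)$ values of $j$.

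Third, apply Theorem \ref{thm:transference} to the vector $(f_1,\ldots,f_m)$ with the event $\sR$ being precisely the event defining the conclusion \eqref{eq:LIL-n} (restricted to grid points $t_k$), i.e. $\sR=\{(n_1,\ldots,n_m):\ \Lambda^{\mathrm{grid}}\in[-1\mp\eps,\ldots]\}$ where the partial sums $\sum_{j\le k}n_j$ play the role of $\omega(\cdot,[2,t_k])$. Since $T_j\subset[2,x^{1/K}]$ we have $u\ge K$, so $\er^{-\alpha u\log u}$ is as small as we like by taking $K$ large (depending on $\eps$), and $\sum_j H_2(T_j)/(1+\sqrt{H_1(T_j)})\le\sum_j H_2(T_j)\ll\sum_{p\ge\xi(x)}1/p^2=o(1)$ since $\xi(x)\to\infty$; hence the right-hand side of Theorem \ref{thm:transference} is $o(1)$. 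Therefore $\PR((f_j)\in\sR)$ for the shifted-prime model differs by $o(1)$ from the integer-model probability, which by the Hall--Tenenbaum result \eqref{eq:LIL-n} is $1-o(1)$ (after checking that the grid-point version of \eqref{eq:LIL-n} follows from \eqref{eq:LIL-n} itself by the same monotonicity argument, possibly with $\eps$ replaced by $\eps/2$ and absorbing the grid slack). Undoing the discretization and the low/high split, and letting $\eta\to 0$ then $K\to\infty$ slowly, yields the claim for all but $o(x/\log x)$ primes $p\in(|a|+1,x]$.

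The main obstacle is the bookkeeping around the three sources of slack and the order in which parameters are sent to their limits: one must choose $K=K(\eps)$ large enough to kill $\er^{-\alpha u\log u}$ \emph{and} make the ``high-$t$'' contribution $\ll 1/K$ times $\sqrt{\log_2 x\cdot\log_4 x}$ negligible, then $\eta=\eta(\eps)$ small enough that each $f_j$ is $o(\sqrt{\log_2 x\cdot\log_4 x})$ with exceptional set $o(x/\log x)$ after a union bound over $m\asymp\log\log x$ blocks, and finally verify that the discretized event $\sR$ genuinely captures the infimum/supremum up to $\eps$ — this last point is where the monotonicity of $t\mapsto\omega(p+a,[2,t])$ (a step function increasing by the block counts $f_j$) does the essential work and must be stated carefully so that no $\Lambda(p+a,t)$ value between grid points escapes the $[-1-\eps,-1+\eps]$ and $[1-\eps,1+\eps]$ windows.
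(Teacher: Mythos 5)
Your high-level plan is the right one and matches the paper's: truncate at a height $y$, handle $t>y$ via the trivial bound on the number of large prime factors, and transfer the $t\le y$ range using Theorem~\ref{thm:transference}. But there are two issues worth flagging. First, a computational slip: the number of prime factors of an integer $n\le x$ lying in $(x^{1/K},x]$ is at most $K$, not $O(\log\log x)$, and in any case your claimed justification ``$O(\log\log x)=o(\sqrt{\log_2 x\cdot\log_4 x})$'' is false, since $\log_2 x$ dominates $\sqrt{\log_2 x\cdot\log_4 x}$. The true count $\le K$ (or $\le\log_3 x$ if one takes $y=x^{1/\log_3 x}$ as the paper does) does satisfy $o(\sqrt{\log_2 x\cdot\log_4 x})$, so the slip is harmless, but the reasoning as written is wrong.

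Second, and more substantively, your discretization into blocks $T_j=(t_{j-1},t_j]$ with $t_{j+1}=t_j^{1+\eta}$ forces you into an auxiliary large-deviation/union-bound argument to ensure no $\Lambda(p+a,t)$ escapes the window between grid points. This can be made to work (one needs $c\log c>\log m\asymp\log_3 x$, so $c\asymp\log_3 x/\log_4 x$, which is indeed $o(\sqrt{\log_2 x\cdot\log_4 x})$), but it is extra machinery with its own bookkeeping, and the TV error from Theorem~\ref{thm2} has to be budgeted into the union bound. The paper sidesteps all of this with one clean choice: take $T_1=\{\text{primes}\le\xi(x)\}$ and then \emph{singleton} sets $T_j=\{q_{j-1}\}$, one for each prime $q$ in $(\xi(x),y]$. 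Knowing $(\omega(n,T_1),\omega(n,\{q_1\}),\ldots,\omega(n,\{q_k\}))$ determines $\omega(n,[2,t])$ exactly for every $t\in(\xi(x),y]$, so the event in \eqref{eq:LIL-n-y} is literally a set $\sR\subset\NN_0^{k+1}$ and Theorem~\ref{thm:transference} applies with no discretization loss. The error term is still small because $\sum_{j\ge 2}H_2(T_j)/(1+\sqrt{H_1(T_j)})\le\sum_{q>\xi(x)}1/q^2\to 0$ and $H_2(T_1)/(1+\sqrt{H_1(T_1)})\ll 1/\sqrt{\log_2\xi(x)}$; uniformity in $m$ is essential here and is exactly what Theorem~\ref{thm:transference} provides. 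I'd encourage you to notice that the transference theorem places no upper bound on the number of blocks, so there is no reason to lump primes together into intervals; treating them individually is both simpler and lossless.
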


\begin{proof}
Let $x$ be large and set $y=x^{1/\log_3 x}$.  All integers $\le x$ have at most
$\log_3 x$ prime factors larger than $y$, and it follows that
\begin{equation}\label{eq:Lambda-x1-x}
\Lambda(n,t) - \Lambda(n,y) \ll \frac{\log_3 x}{\sqrt{\log_2 x\cdot \log_4 x}} \qquad (y\le t\le x, \text{all } n\le x).
\end{equation}
Hence, by \eqref{eq:LIL-n},  for any $\eps>0$ we have
\begin{equation}\label{eq:LIL-n-y}
\inf_{\xi(x)<t\le y} \Lambda(n,t) \in [-1-\eps/2,-1+\eps/2], \qquad 
\sup_{\xi(x)<t\le y} \Lambda(n,t) \in [1-\eps/2,1+\eps/2]
\end{equation}
for all but $o(x)$ integers up to $x$, as $x\to\infty$.
Denote by $q_1,\ldots,q_k$ the primes in $(\xi(x),y]$, let
$T_1$ be the set of primes in $[2,\xi(x)]$, let $T_j = \{q_{j-1}\}$ for $2\le j\le k+1$
and set $f_i=\omega(n,T_i)$ for each $i$.
The condition \eqref{eq:LIL-n-y} is then equivalent to 
$(\omega(n,T_1),\ldots,\omega(n,T_{k+1}))\in \sR$ for some set $\sR \subseteq \NN_0^{k+1}$.  Apply Theorem \ref{thm:transference} with $\alpha=\frac12$ (allowed since Hypothesis $Z(\gamma)$ holds for some $\gamma>\frac12$), $A=1$
and $u=\log_3 x$, and we see that
\[
\sum_{j=1}^{k+1} \frac{H_2(T_j)}{1+\sqrt{H_1(T_j)}} 
 + \er^{-\alpha u\log u}+\frac{1}{\log x} \ll \frac{1}{\log_2 x}+\frac{1}{\sqrt{H_1(T_1)}}+\sum_{q>\xi(x)}\frac{1}{q^2} \ll \frac{1}{\sqrt{\log_2 \xi(x)}}.
\]
Therefore, \eqref{eq:LIL-n-y} holds with $n=p+a$
for all but $o(x/\log x)$ primes $p\in (|a|+1,x]$, as $x\to\infty$.
Recalling \eqref{eq:Lambda-x1-x}, this completes the proof.
\end{proof}

A variant of Theorem \ref{thm:LIL} is needed in recent work 
on primes with small primitive roots \cite{FGGK}.

\medskip

\textbf{Acknowledgements.}  The author thanks Steve Fan, Mikhail Gabdullin, G\'erald Tenenbaum
and the anonymous referees
for helpful comments on earlier versions of the paper.  The author was supported in part by
National Science foundation grants DMS-1802139 and DMS-2301264.

\medskip

%
{\Large \section{Total variation distance inequalities}}

The following are standard; for completeness we give full proofs.

\begin{lem}\label{dTV-eq}
For random variables $X,Y$ defined on the same countable space $\Omega$, we have
\[
d_{TV}(X,Y) = \frac12 \sum_{\omega\in \Omega} \big| \PR(X=\omega)-\PR(Y=\omega) \big|
\]
\end{lem}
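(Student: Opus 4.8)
The plan is to show the two quantities bound each other. Write $p(\omega) = \PR(X=\omega)$ and $q(\omega) = \PR(Y=\omega)$, and set $S = \{\omega : p(\omega) > q(\omega)\}$, the set where the difference is positive. The key observation is that for \emph{any} $A \subseteq \Omega$,
\[
\PR(X\in A) - \PR(Y\in A) = \sum_{\omega\in A}\big(p(\omega)-q(\omega)\big) \le \sum_{\omega\in A\cap S}\big(p(\omega)-q(\omega)\big) \le \sum_{\omega\in S}\big(p(\omega)-q(\omega)\big),
\]
since discarding terms where $p(\omega)\le q(\omega)$ only increases the sum, and then adding back the remaining nonnegative terms of $S$ only increases it further. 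The same bound with the roles of $X$ and $Y$ swapped gives $\PR(Y\in A)-\PR(X\in A) \le \sum_{\omega\notin S}\big(q(\omega)-p(\omega)\big)$. Hence $d_{TV}(X,Y) \le \max\big(\sum_{S}(p-q),\ \sum_{\Omega\setminus S}(q-p)\big)$, and in fact the supremum in \eqref{dTV-def} is attained, equal to this max, by taking $A=S$ (or $A=\Omega\setminus S$).

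Next I would observe that these two sums are actually equal: since $\sum_\omega p(\omega) = \sum_\omega q(\omega) = 1$, we have $\sum_{\omega}\big(p(\omega)-q(\omega)\big) = 0$, so $\sum_{\omega\in S}\big(p(\omega)-q(\omega)\big) = \sum_{\omega\notin S}\big(q(\omega)-p(\omega)\big)$. Call this common value $D$. Then $d_{TV}(X,Y) = D$. Finally, the full sum of absolute values splits over $S$ and its complement:
\[
\tfrac12\sum_{\omega\in\Omega}\big|p(\omega)-q(\omega)\big| = \tfrac12\Big(\sum_{\omega\in S}\big(p(\omega)-q(\omega)\big) + \sum_{\omega\notin S}\big(q(\omega)-p(\omega)\big)\Big) = \tfrac12(D+D) = D,
\]
which equals $d_{TV}(X,Y)$ as required.

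There is no real obstacle here — this is the standard identification of total variation distance with (half) the $\ell^1$ distance between the mass functions, and the only thing to be slightly careful about is that $\Omega$ is countable rather than finite, so one should note that all the series involved are absolutely convergent (each is bounded by $1$ term-by-term against $p+q$), which justifies rearranging and splitting them freely. One should also remark that the supremum in \eqref{dTV-def} is over subsets $A$ of $\Omega$ (the excerpt's display writes $U\subset\Omega$ but then uses $A$; I will just use $A$ consistently), and that it is attained at $A=S$, so writing it as a supremum versus a maximum makes no difference.
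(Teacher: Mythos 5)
Your proof is correct and follows essentially the same route as the paper's: identify the set $S=\{\omega: p(\omega)>q(\omega)\}$ (the paper calls it $U^+$) as the maximizer, use $\sum p=\sum q=1$ to equate $\sum_S(p-q)$ with $\sum_{\Omega\setminus S}(q-p)$, and then split the $\ell^1$ sum over $S$ and its complement. You spell out the verification that the supremum is attained and the absolute-convergence justification, which the paper leaves implicit, but the underlying argument is the same.
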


\begin{proof}
The supremum in \eqref{dTV-def} occurs when $U=U^+:=\{\omega\in \Omega: \PR(X=\omega)>\PR(Y=\omega)\}$ and when $U=U^-:=\{\omega\in \Omega: \PR(X=\omega)<\PR(Y=\omega)\}$, thus
\begin{align*}
2d_{TV}(X,Y) &= \PR(X\in U^+) - \PR(Y\in U^+) + \PR(Y\in U^-) - \PR(X\in U^-)\\
&= \sum_{\omega\in \Omega} \big| \PR(X=\omega)-\PR(Y=\omega) \big|.\qedhere
\end{align*}
\end{proof}

\begin{lem}\label{dTV-sum}
If $X_1,\ldots,X_m$ are independent discrete random
variables,
 and $Y_1,\ldots,Y_m$ are independent discrete random variables
(with $Y_j$ having the same domain as $X_j$ for each $j$), then
\[
d_{TV} \big( (X_1,\ldots,X_m),(Y_1,\ldots,Y_m) \big) \le \sum_{j=1}^m d_{TV} (X_j,Y_j).
\]
\end{lem}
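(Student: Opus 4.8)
The plan is to induct on $m$, reducing everything to the case $m=2$. For the base case $m=2$, I would use the explicit formula from Lemma \ref{dTV-eq}: writing $p_j(\omega)=\PR(X_j=\omega)$ and $q_j(\omega)=\PR(Y_j=\omega)$, independence gives $\PR((X_1,X_2)=(\omega_1,\omega_2)) = p_1(\omega_1)p_2(\omega_2)$ and similarly for $Y$. Then
\[
2\,d_{TV}\big((X_1,X_2),(Y_1,Y_2)\big) = \sum_{\omega_1,\omega_2} \big| p_1(\omega_1)p_2(\omega_2) - q_1(\omega_1)q_2(\omega_2) \big|.
\]
The standard trick is to insert a hybrid term: $p_1(\omega_1)p_2(\omega_2) - q_1(\omega_1)q_2(\omega_2) = p_1(\omega_1)\big(p_2(\omega_2)-q_2(\omega_2)\big) + \big(p_1(\omega_1)-q_1(\omega_1)\big)q_2(\omega_2)$. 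Applying the triangle inequality termwise and summing over $\omega_1,\omega_2$, the first group sums to $\big(\sum_{\omega_1}p_1(\omega_1)\big)\big(\sum_{\omega_2}|p_2(\omega_2)-q_2(\omega_2)|\big) = 2\,d_{TV}(X_2,Y_2)$ since $\sum_{\omega_1}p_1(\omega_1)=1$, and likewise the second group contributes $2\,d_{TV}(X_1,Y_1)$. Dividing by $2$ gives the $m=2$ case.

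For the inductive step, I would group $(X_1,\ldots,X_m)$ as the pair $\big((X_1,\ldots,X_{m-1}),\,X_m\big)$, noting that the vector $(X_1,\ldots,X_{m-1})$ is independent of $X_m$, and similarly on the $Y$ side; this is exactly the setup of the $m=2$ case with the "first coordinate" taking values in the product space. The $m=2$ result then bounds $d_{TV}\big((X_1,\ldots,X_m),(Y_1,\ldots,Y_m)\big)$ by $d_{TV}\big((X_1,\ldots,X_{m-1}),(Y_1,\ldots,Y_{m-1})\big) + d_{TV}(X_m,Y_m)$, and the induction hypothesis finishes the job.

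There is no real obstacle here; the only mild point of care is bookkeeping in the hybrid-term estimate — making sure the marginal probabilities used as weights sum to $1$ so that the cross terms collapse cleanly — and observing that the product space on which $(X_1,\ldots,X_{m-1})$ lives is still countable, so Lemma \ref{dTV-eq} applies verbatim at each stage.
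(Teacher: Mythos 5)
Your proof is correct and is essentially the same argument as in the paper: the paper uses the full telescoping identity $a_1\cdots a_m - b_1\cdots b_m = \sum_{j=1}^m (a_j-b_j)\prod_{i<j}a_i\prod_{i>j}b_i$ directly with the triangle inequality, while you package the identical hybrid-term idea as an $m=2$ base case plus induction. The two presentations differ only cosmetically; the underlying decomposition and the use of Lemma~\ref{dTV-eq} with the marginals summing to one are exactly the same.
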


\begin{proof}
We begin with the following identity
\[
a_1 \cdots a_m - b_1\cdots b_m = \sum_{j=1}^m  (a_j-b_j)\prod_{i<j}
a_i \prod_{i>j} b_i,
\]
valid for all real numbers $a_1,b_1,\ldots,a_m,b_m$.
Denoting by $\Omega$ the domain of $(X_1,\ldots,X_m)$, and writing
$a_i=\PR(X_i=\omega_i)$, $b_i=\PR(Y_i=\omega_i)$,
 we then have
\begin{align*}
d_{TV}  ( (X_1,\ldots,X_m),(Y_1,\ldots,Y_m) ) &= \frac12 \sum_{
  (\omega_1,\ldots,\omega_m)\in \Omega} \big| \PR(X_j=\omega_j, 1\le
j\le m) - \PR(Y_j=\omega_j, 1\le j\le m) \big| \\
 &= \frac12 \sum_{  (\omega_1,\ldots,\omega_m)\in \Omega} |a_1\cdots
a_m - b_1\cdots b_m|\\
 &\le \frac12 \sum_{j=1}^m \sum_{\omega_j} |a_j-b_j| \sum_{\omega_i \;
  (i\ne j)}  \prod_{i<j} a_i \prod_{i>j} b_i \\
&= \frac12 \sum_{j=1}^m \sum_{\omega_j} |a_j-b_j| \\
&=  \sum_{j=1}^m d_{TV} (X_j,Y_j).\qedhere
\end{align*}
\end{proof}

\begin{lem}\label{dTV-Poisson}
Let $X$ be a Poisson random variable with parameter $\lambda$ and $Y$ be a Poisson random variable with parameter $\lambda'$, where $0<\lambda<\lambda'$.  Then
\[
d_{TV}(X,Y) \ll \frac{\lambda'-\lambda}{1+\sqrt{\lambda}}.
\]
\end{lem}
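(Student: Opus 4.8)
The plan is to bound the total variation distance between two Poisson distributions with parameters $\lambda<\lambda'$ by exhibiting an explicit coupling. Since the sum of two independent Poisson variables with parameters $\lambda$ and $\lambda'-\lambda$ is Poisson with parameter $\lambda'$, I would let $X$ be Poisson$(\lambda)$, let $\xi$ be an independent Poisson$(\lambda'-\lambda)$ variable, and set $Y=X+\xi$. Then $d_{TV}(X,Y)\le \PR(X\ne Y)=\PR(\xi\ge 1)=1-e^{-(\lambda'-\lambda)}\le \lambda'-\lambda$. This already gives the bound when $\lambda\le 1$, say, since then $1+\sqrt\lambda\asymp 1$ and we are done. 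So the remaining work is to sharpen the estimate by a factor $\asymp\sqrt\lambda$ when $\lambda$ is large.

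For the regime $\lambda\ge 1$, I would instead work directly with the formula from Lemma \ref{dTV-eq}, writing
\[
2d_{TV}(X,Y) = \sum_{k\ge 0} \Big| e^{-\lambda}\frac{\lambda^k}{k!} - e^{-\lambda'}\frac{(\lambda')^k}{k!}\Big|.
\]
The idea is that $e^{-\lambda}\lambda^k/k!$, viewed as a function of the continuous parameter $\lambda$, has derivative $e^{-\lambda}(\lambda^{k-1}/(k-1)! - \lambda^k/k!)$, so by the fundamental theorem of calculus the difference of the two point masses at $k$ is $\int_\lambda^{\lambda'}\frac{d}{dt}\big(e^{-t}t^k/k!\big)\,dt$, whose absolute value is at most $\int_\lambda^{\lambda'} e^{-t}\big|\,t^{k-1}/(k-1)! - t^k/k!\,\big|\,dt$ (with the $k=0$ term handled separately). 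Summing over $k$ and interchanging sum and integral, the inner sum $\sum_{k\ge 1} e^{-t}|t^{k-1}/(k-1)! - t^k/k!|$ is, up to a constant, the expected absolute deviation $\E|N_t-t|/t$ where $N_t$ is Poisson$(t)$ — or more precisely it is $2\PR(N_t = \fl{t})$ type quantity; in any case it is a classical fact that $\sum_k |\PR(N_t=k)-\PR(N_t=k-1)| \ll 1/\sqrt t$ for $t\ge 1$ (this is the local-limit/concentration statement that the Poisson mode has height $\asymp 1/\sqrt t$, and consecutive probabilities differ by $O(1/\sqrt t)$ summed). Integrating $1/\sqrt t$ over $[\lambda,\lambda']$ and bounding $\int_\lambda^{\lambda'} t^{-1/2}\,dt = 2(\sqrt{\lambda'}-\sqrt\lambda) = \frac{2(\lambda'-\lambda)}{\sqrt{\lambda'}+\sqrt\lambda}\le \frac{2(\lambda'-\lambda)}{\sqrt\lambda}$ gives exactly the claimed bound $\ll (\lambda'-\lambda)/\sqrt\lambda \asymp (\lambda'-\lambda)/(1+\sqrt\lambda)$ in this regime.

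The main obstacle is establishing the uniform estimate $\sum_{k\ge 0} |\PR(N_t=k)-\PR(N_t=k-1)| \ll 1/\sqrt t$ for all $t\ge 1$ (equivalently $\sum_k |p_k-p_{k-1}|$ where $p_k = e^{-t}t^k/k!$). One clean way: the sum telescopes in blocks — $p_k$ is unimodal in $k$ with maximum near $k=\fl t$, so $\sum_k|p_k-p_{k-1}| = 2\max_k p_k = 2\,p_{\fl t}$ (the total variation of a unimodal sequence equals twice its peak minus the endpoints, which vanish), and then Stirling gives $p_{\fl t}\ll 1/\sqrt t$. I would verify unimodality ($p_k/p_{k-1} = t/k$ is decreasing, crossing $1$ at $k=t$) and then quote Stirling. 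An alternative that avoids unimodality bookkeeping is to use $|p_k-p_{k-1}| = p_k|1-k/t|$ and Cauchy–Schwarz: $\sum_k p_k|1-k/t| \le \big(\sum_k p_k (1-k/t)^2\big)^{1/2} = (\operatorname{Var}(N_t)/t^2)^{1/2} = (t/t^2)^{1/2} = 1/\sqrt t$, which is the slickest route and I would use that. Care is needed only to absorb the $k=0$ boundary term and to combine the two regimes $\lambda\le 1$ and $\lambda\ge 1$ into the single stated bound with $1+\sqrt\lambda$ in the denominator.
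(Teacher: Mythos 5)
Your proof is correct, but it takes a genuinely different route from the paper's. The paper treats $\lambda'<1$ by a direct computation of the $\ell^1$-sum (noting the sign pattern so the sum telescopes), and for $\lambda'\ge 1$ it invokes Pinsker's inequality, computes the Kullback--Leibler divergence $D_{KL}(X\|Y)=(\lambda'-\lambda)-\lambda\log(\lambda'/\lambda)\ll(\lambda'-\lambda)^2/\lambda$, and first disposes of the range $\lambda'>\lambda+\sqrt\lambda$ as trivial so that the quadratic Taylor bound applies. You instead use a monotone coupling $Y=X+\xi$ to get the elementary bound $d_{TV}\le\lambda'-\lambda$ for the small-$\lambda$ regime, and for large $\lambda$ you differentiate the Poisson point masses in the parameter $t$, interchange sum and integral, and bound $\sum_k|p_{k-1}(t)-p_k(t)|=\mathbb E|N_t-t|/t\le 1/\sqrt t$ by Cauchy--Schwarz; integrating $t^{-1/2}$ over $[\lambda,\lambda']$ finishes. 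Your argument is more self-contained (no appeal to Pinsker or relative entropy) and the ``total variation in the parameter equals mean absolute deviation'' identity is a nice structural observation that also yields the slightly sharper unconditional bound $d_{TV}\le\sqrt{\lambda'}-\sqrt\lambda$; the paper's Pinsker route is shorter to state given the off-the-shelf reference. Both give the same quality of estimate. Minor cleanup if you write it up: handle the $k=0$ term uniformly by setting $p_{-1}\equiv 0$ so that $p_k'(t)=p_{k-1}(t)-p_k(t)$ for all $k\ge 0$, and drop the unimodality/Stirling detour since the Cauchy--Schwarz computation you end with already suffices.
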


\begin{proof}
First suppose that $\lambda'<1$.  Then $\er^{-\lambda'}(\lambda')^k \ge \er^{-\lambda}\lambda^k$ for all $k\ge 1$.  Hence, by Lemma \ref{dTV-eq},
\[
2 d_{TV}(X,Y) = \er^{-\lambda}-\er^{-\lambda'}+\sum_{k=1}^\infty \frac{\er^{-\lambda'}(\lambda')^k - \er^{-\lambda}\lambda^k}{k!} = 2\big( \er^{-\lambda}-\er^{-\lambda'}\big)
\le 2(\lambda'-\lambda).
\]
Now assume $\lambda' \ge 1$.  The conclusion is trivial if $\lambda' \ge  \lambda+\sqrt{\lambda}$
thus we may assume that $1 \le \lambda' \le \lambda+\sqrt{\lambda}$.
By Pinsker's inequality \cite[Theorem 2.16]{Massart},
\[
d_{TV}(X,Y) \le \sqrt{(1/2)D_{KL}(X \| Y)}, 
\]
where $D_{KL}(X \| Y)$ is the Kullback-Leibler divergence (also called relative entropy)
between $X$ and $Y$, given by
\[
D_{KL}(X \| Y) := \sum_{k=0}^\infty \PR(X=k) \log \frac{\PR(X=k)}{\PR(Y=k)}.
\]
An easy calculation gives
\begin{align*}
D_{KL}(X \| Y) &= (\lambda'-\lambda) - \lambda \log (\lambda'/\lambda)
\ll \frac{(\lambda'-\lambda)^2}{\lambda},
\end{align*}
and the lemma follows.
\end{proof}

\medskip

%
%
{\Large \section{Sieving by small primes}}
%
%

Define the set
\[
\cE(x,y;\theta,A) = \bigg\{ m \le x^{\theta}, P^+(m)\le y : \ssum{d\le x^\theta/m\\ P^+(d)\le y \\ (dm,a)=1} \bigg|
\pi(x;md;-a) - \frac{\pi(x)}{\phi(md)} \bigg| \ge \frac{\pi(x)}{\phi(m) (\log x)^{A} } 
\bigg\}.
\]

\medskip

\begin{lem}\label{lem:Eset}
Assume Hypothesis $Z(\gamma)$.
For any $A>0$, $\theta$ satisfying 
\be\label{theta}
\frac12 < \theta < \gamma,
\ee
$\delta$ sufficiently small (as a function of $\theta$ only), and 
$y$ satisfying
\be\label{y}
 y \le x^{\delta}
\ee
we have
\[
\sum_{m\in \cE(x,y;\theta,A)} \frac{1}{\phi(m)} \ll_{A,\theta,a}\, \frac{1}{(\log x)^{A+1}}.
\]
\end{lem}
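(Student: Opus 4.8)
The plan is to bound the sum over $m\in\cE(x,y;\theta,A)$ by a dyadic-type argument reducing it to Hypothesis $Z(\gamma)$, together with standard estimates on sums of $1/\phi(m)$ over smooth numbers. First I would observe that if $m\in\cE(x,y;\theta,A)$, then by definition
\[
\frac{\pi(x)}{\phi(m)(\log x)^A} \le \ssum{d\le x^\theta/m\\ P^+(d)\le y\\(dm,a)=1}\bigg|\pi(x;md;-a)-\frac{\pi(x)}{\phi(md)}\bigg|,
\]
so that
\[
\sum_{m\in\cE(x,y;\theta,A)}\frac1{\phi(m)} \le \frac{(\log x)^A}{\pi(x)}\sum_{m\in\cE(x,y;\theta,A)}\ \ssum{d\le x^\theta/m\\ P^+(d)\le y\\(dm,a)=1}\bigg|\pi(x;md;-a)-\frac{\pi(x)}{\phi(md)}\bigg|.
\]
Writing $n=md$ with $n\le x^\theta$, $P^+(n)\le y$, $(n,a)=1$, and noting that each such $n$ arises from at most $d(n)\ll_\eps n^\eps \le x^{\eps\theta}$ factorizations $n=md$ (in fact I can afford to be wasteful here), the double sum is at most $x^{\eps\theta}$ times $\sum_{n\le x^\theta,\,P^+(n)\le y,\,(n,a)=1}\big|\pi(x;n;-a)-\pi(x)/\phi(n)\big|$. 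Choosing $\delta$ small enough that $y\le x^\delta$ forces the relevant $n$ to be $x^\delta$-smooth, and choosing $\eps$ small enough (in terms of $\theta$ and $\gamma$) that $\theta(1+\eps)<\gamma-\eps'$ for a suitable $\eps'>0$, I can apply Hypothesis $Z(\gamma)$ with $B$ chosen large (say $B=2A+10$) to conclude that this last sum is $\ll_{B,\eps,a} x/(\log x)^{B}$.

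Assembling these bounds gives
\[
\sum_{m\in\cE(x,y;\theta,A)}\frac1{\phi(m)} \ll_{A,\theta,a} \frac{(\log x)^A}{x/\log x}\cdot x^{\eps\theta}\cdot\frac{x}{(\log x)^{B}} \ll \frac{x^{\eps\theta}}{(\log x)^{B-A-1}},
\]
which is certainly $\ll (\log x)^{-(A+1)}$ once $B\ge 2A+2$ — except that the stray factor $x^{\eps\theta}$ must be removed, since $Z(\gamma)$ cannot absorb a genuine power of $x$. So the divisor bound $d(n)\ll_\eps n^\eps$ is too lossy, and I would instead interchange the order of summation differently: sum over $n$ first with multiplicity the number of valid $m\mid n$, but weight by $1/\phi(m)\le 1/\phi(n)\cdot(\text{something})$... cleaner is to bound $1/\phi(m)\le \prod_{p\mid m}(1-1/p)^{-1}$ and use that $\sum_{m\mid n}1/\phi(m) = \prod_{p^a\| n}(1 + \tfrac1{\phi(p)}+\cdots+\tfrac1{\phi(p^a)}) \ll \prod_{p\mid n}(1+\tfrac{2}{p}) \ll (\log\log x)^{O(1)}$ uniformly for $n$ with $P^-(n)$ not too small — but small primes are a problem. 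The right move is to split off the primes $p\le \log x$ dividing $m$: the contribution of $m$ with $P^-(m)\le\log x$ is handled separately (there are few such $m$ in $\cE$, or one estimates the $\phi$-sum over them directly against a single application of $Z(\gamma)$), while for $m$ with $P^-(m)>\log x$ one has $\sum_{m\mid n}1/\phi(m)\ll 1+o(1)$. I expect this bookkeeping — arranging the interchange of summation so that no power of $x$ and no large power of $\log x$ is lost — to be the main obstacle; everything else is a direct appeal to $Z(\gamma)$ with $B$ chosen large in terms of $A$.

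Concretely, the cleanest route I would write up: fix $\eps'>0$ with $\theta<\gamma-\eps'$, apply Hypothesis $Z(\gamma)$ with this $\eps'$ to get $\delta=\delta(\eps',\gamma)>0$ and the bound $\sum_{n\le x^{\theta},\,P^+(n)\le x^\delta,\,(n,a)=1}\big|\pi(x;n;-a)-\pi(x)/\phi(n)\big|\ll_{B,a} x/(\log x)^B$ for any $B$; take $B=A+2$. Shrink $\delta$ if necessary so that also $y\le x^\delta$ is consistent with \eqref{y}. Then
\[
\sum_{m\in\cE(x,y;\theta,A)}\frac1{\phi(m)} \le \frac{(\log x)^A}{\pi(x)}\sum_{\substack{n\le x^\theta\\ P^+(n)\le y\\(n,a)=1}}\bigg(\sum_{m\mid n}\frac1{\phi(m)}\bigg)\bigg|\pi(x;n;-a)-\frac{\pi(x)}{\phi(n)}\bigg|,
\]
and since for the $n$ in question $\sum_{m\mid n}1/\phi(m)=\prod_{p^b\|n}\big(1+\sum_{1\le j\le b}\phi(p^j)^{-1}\big)\le \prod_{p\mid n}\big(1+\tfrac{2}{p-1}\big)$, which I bound crudely by $\exp\!\big(\sum_{p\mid n}\tfrac{2}{p-1}\big)\ll (\log x)^{o(1)}$ using $n\le x$ so $\omega(n)\ll \log x/\log\log x$ and $P^-(n)$ either $>\log x$ (giving $O(1)$) or else handled by the trivial observation that primes $p\le\log x$ contribute at most $\sum_{p\le\log x}2/(p-1)\ll\log\log\log x$ to the exponent — in all cases the factor is $(\log x)^{o(1)}$. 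Hence the whole expression is $\ll (\log x)^{A+o(1)}\cdot(\log x)/x\cdot x/(\log x)^{A+2}\ll (\log x)^{-1+o(1)}$, and after replacing $A$ by $A+1$ at the outset (i.e. applying $Z(\gamma)$ with $B=A+3$) one gets the claimed $\ll_{A,\theta,a}(\log x)^{-(A+1)}$. I would present the $(\log x)^{o(1)}$ estimate for $\sum_{m\mid n}1/\phi(m)$ carefully, as that is where the only real content beyond $Z(\gamma)$ lies.
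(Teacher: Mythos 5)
Your setup is right: dividing the defining inequality of $\cE(x,y;\theta,A)$ by $\pi(x)/(\phi(m)(\log x)^A)$ and summing over $m$ gives
\[
\sum_{m\in\cE(x,y;\theta,A)}\frac1{\phi(m)}\;\le\;\frac{(\log x)^A}{\pi(x)}\ssum{n\le x^\theta\\P^+(n)\le y\\(n,a)=1}\tau(n)\,\bigg|\pi(x;n,-a)-\frac{\pi(x)}{\phi(n)}\bigg|,
\]
and you correctly observe that the pointwise bound $\tau(n)\ll_\eps n^\eps$ loses a power of $x$ that Hypothesis $Z(\gamma)$ cannot absorb. But your ``cleaner route'' rests on an algebra slip: you replace the weight $\tau(n)$ with $\sum_{m\mid n}1/\phi(m)$, and that is not what the interchange yields. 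The factor $1/\phi(m)$ was already cancelled against the $\phi(m)$ sitting in the threshold $\pi(x)/(\phi(m)(\log x)^A)$, so the weight on $|\pi(x;n,-a)-\pi(x)/\phi(n)|$ is the \emph{number} of admissible factorizations $n=md$, which for these $n$ is exactly $\tau(n)$. Your estimate $\sum_{m\mid n}1/\phi(m)\ll(\log x)^{o(1)}$ is true but irrelevant; the actual weight $\tau(n)$ can be as large as $x^{o(1)}$ pointwise, so the obstruction you identified in the first place has not gone away.

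The missing idea is the standard device for removing a divisor weight from a Bombieri--Vinogradov sum: apply the trivial bound to half of $|\Delta(n)|$ and Cauchy--Schwarz the rest. Write $\Delta(n)=\pi(x;n,-a)-\pi(x)/\phi(n)$ and use $|\Delta(n)|\ll x/n$ on one square root:
\begin{align*}
\ssum{n\le x^\theta\\P^+(n)\le y\\(n,a)=1}\tau(n)|\Delta(n)| &\ll x^{1/2}\ssum{n\le x^\theta\\P^+(n)\le y\\(n,a)=1}\frac{\tau(n)}{n^{1/2}}|\Delta(n)|^{1/2}\\
&\le x^{1/2}\bigg(\sum_{n\le x}\frac{\tau(n)^2}{n}\bigg)^{1/2}\Bigg(\ssum{n\le x^\theta\\P^+(n)\le y\\(n,a)=1}|\Delta(n)|\Bigg)^{1/2}.
\end{align*}
Here $\sum_{n\le x}\tau(n)^2/n\ll(\log x)^4$, so the first parenthesis contributes only $(\log x)^2$, and the second is precisely the quantity controlled by Hypothesis $Z(\gamma)$. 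Taking $B$ around $4A+10$ in the hypothesis (the square root halves the saving, and the $(\log x)^2$ must also be absorbed) gives a bound $\ll x/(\log x)^{2A+2}$; dividing by $\pi(x)/(\log x)^A\asymp x/(\log x)^{A+1}$ yields the claimed $\ll_{A,\theta,a}(\log x)^{-(A+1)}$. This Cauchy--Schwarz step is the only real content of the lemma beyond $Z(\gamma)$, and it is the step your proposal does not supply.
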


\begin{proof}
  We have
\begin{align*}
\frac{\pi(x)}{(\log x)^{A}} \sum_{m\in \cE(x,y;\theta,A)} \frac{1}{\phi(m)} &\le 
\ssum{m\le x^{\theta} \\ P^+(m)\le y} \ssum{d\le x^{\theta}/m \\ P^+(d)\le y \\ (dm,a)=1} 
\bigg| \pi(x;dm,-a)-\frac{\pi(x)}{\phi(dm)} \bigg| \\
&\le \ssum{s\le x^{\theta} \\ P^+(s)\le y \\ (s,a)=1} \tau(s) 
\bigg| \pi(x;s,-a)-\frac{\pi(x)}{\phi(s)} \bigg|\\
&\ll x^{1/2} \ssum{s\le x^{\theta} \\ P^+(s)\le y \\ (s,a)=1} \frac{\tau(s)}{s^{1/2}} 
\bigg| \pi(x;s,-a)-\frac{\pi(x)}{\phi(s)} \bigg|^{1/2}.
\end{align*}
Let $B=4A+10$.
By a standard application of Cauchy-Schwartz, the right side is
\begin{align*}
\le  x^{1/2} \bigg( \sum_{s\le x} \frac{\tau^2(s)}{s} \bigg)^{1/2}
\Bigg( \ssum{s\le x^{\theta} \\ P^+(s)\le y \\ (s,a)=1}  
\bigg| \pi(x;s,-a)-\frac{\pi(x)}{\phi(s)} \bigg| \Bigg)^{1/2} \ll
\frac{x}{(\log x)^{2A+2}},
\end{align*}
using Hypothesis $Z(\gamma)$ in the last step.
\end{proof}

\begin{lem}\label{lem:gym}
For positive integer $m$ with $P^+(m)\le y$ and factorization $m=\prod_{q\le y} q^{w_q}$, define
\[
g_y(m) = \PR \big( W_q = w_q\; \forall q\le y  \big).
\] 
Then $g_y(m)=0$ if $(a,m)>1$ or if $a\equiv m\equiv 1\pmod{2}$, and otherwise equals
\be\label{gym}
g_y(m) = \frac{1}{\phi(m)}\sprod{q\le y \\ q\nmid a} \(1 - \frac{\phi(m)}{\phi(qm)}\) = \frac{1}{m} \sprod{q\le y \\ q\nmid am} \(1-\frac{1}{q-1}\).
\ee
\end{lem}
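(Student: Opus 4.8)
\textbf{Proof proposal for Lemma \ref{lem:gym}.}

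The plan is to compute $g_y(m)$ directly from the definition, using the independence of the $W_q$'s and the explicit distribution \eqref{Wq}. First I would dispose of the degenerate cases. If some prime $q\mid a$ divides $m$, then $w_q\ge 1$, but $W_q=0$ with probability $1$, so $g_y(m)=0$. If $a$ and $m$ are both odd, then $W_2=0$ with probability $1$ (since $\PR(W_2=0)=1-\frac{1}{2-1}=0$), yet writing $m=\prod q^{w_q}$ with $2\le y$ forces the event $W_2=w_2=0$ to have probability zero only if... wait, here $w_2=0$ since $m$ is odd, so one must instead note $\PR(W_2=0)=1-\frac{1}{2-1}=0$, hence the factor for $q=2$ vanishes and $g_y(m)=0$. (When $2\mid m$ this does not arise because then $a$ is even, handled by the first case, or $w_2\ge 1$.) In all remaining cases $(a,m)=1$ and not both $a,m$ odd.

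For the main case, by independence of the $W_q$ I would write
\[
g_y(m) = \prod_{q\le y} \PR(W_q=w_q) = \prod_{\substack{q\le y\\ q\mid a}} \PR(W_q=0)\cdot \prod_{\substack{q\le y\\ q\nmid a}} \PR(W_q=w_q).
\]
Since $(a,m)=1$, every $q\mid a$ has $w_q=0$, and $\PR(W_q=0)=1-\frac{1}{q-1}$ for such $q$ (these are the primes $q\nmid m$ with $q\mid a$; note $q\ge 3$ here so $1-\frac{1}{q-1}\ne 0$). For $q\nmid a$, split according to whether $w_q\ge 1$: if $w_q\ge 1$ then $\PR(W_q=w_q)=q^{-w_q}$ by \eqref{Wq}, and if $w_q=0$ then $\PR(W_q=0)=1-\frac{1}{q-1}$. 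Collecting the terms with $w_q\ge 1$ gives $\prod_{q^{w_q}\| m} q^{-w_q}=1/m$ (using $q\nmid a$ for all $q\mid m$, which holds since $(a,m)=1$). This yields the second expression in \eqref{gym}:
\[
g_y(m) = \frac1m \prod_{\substack{q\le y\\ q\nmid am}}\Bigl(1-\frac1{q-1}\Bigr).
\]

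To obtain the first expression, I would manipulate $\phi(m)$. Observe $1-\frac{\phi(m)}{\phi(qm)}=1-\frac1{q-1}$ when $q\nmid m$ (since $\phi(qm)=(q-1)\phi(m)$), and $1-\frac{\phi(m)}{\phi(qm)}=1-\frac1q$ when $q\mid m$ (since $\phi(qm)=q\phi(m)$). So
\[
\frac1{\phi(m)}\prod_{\substack{q\le y\\ q\nmid a}}\Bigl(1-\frac{\phi(m)}{\phi(qm)}\Bigr)
= \frac1{\phi(m)}\prod_{\substack{q\mid m}}\Bigl(1-\frac1q\Bigr)\prod_{\substack{q\le y\\ q\nmid am}}\Bigl(1-\frac1{q-1}\Bigr),
\]
where I used $(a,m)=1$ so that $q\mid m\Rightarrow q\nmid a$. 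Finally $\frac1{\phi(m)}\prod_{q\mid m}(1-\frac1q)=\frac1{\phi(m)}\cdot\frac{\phi(m)}{m}=\frac1m$, matching the second expression and completing the identity. There is no real obstacle here; the only point requiring care is bookkeeping the three types of primes ($q\mid a$, $q\mid m$, $q\nmid am$) and confirming the degenerate $q=2$ case is correctly absorbed into the stated vanishing conditions.
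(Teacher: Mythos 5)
Your approach is the same as the paper's (the paper declares the lemma "follows immediately from \eqref{Wq}", and you are just carrying out the direct computation), and your final formula is correct, but there is a concrete slip in the middle that happens to cancel out. You write that for primes $q\mid a$ one has $\PR(W_q=0)=1-\frac{1}{q-1}$; this is wrong. By the paper's definition just above \eqref{Wq}, for $q\mid a$ the variable $W_q$ is identically $0$, so $\PR(W_q=0)=1$, not $1-\frac{1}{q-1}$ (the formula \eqref{Wq} is only in force for $q\nmid a$). With your value, the product $\prod_{q\le y,\,q\mid a}\PR(W_q=0)$ would be a nontrivial factor and your final expression would instead be $\frac{1}{m}\prod_{q\le y,\,q\nmid m}\bigl(1-\frac{1}{q-1}\bigr)$, which is not the stated identity. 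Your answer comes out right only because you tacitly drop this (incorrectly computed) factor when you pass to ``This yields the second expression.'' The fix is one line: for $q\mid a$ and $(a,m)=1$ we have $w_q=0$ and $\PR(W_q=0)=1$, so these primes contribute a factor of $1$ and can be ignored. A minor related quibble: your opening sentence of the $a\equiv m\equiv 1\pmod 2$ case (``$W_2=0$ with probability $1$'') contradicts the parenthetical that follows ($\PR(W_2=0)=0$); you self-correct, but the final write-up should state it cleanly: since $2\nmid a$, \eqref{Wq} gives $\PR(W_2=0)=0$, and since $m$ is odd $w_2=0$, so the $q=2$ factor vanishes.
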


\begin{proof}
This follows immediately from \eqref{Wq}.
\end{proof}

For $m\in \NN$ define
\[
\Phi_m(x,y) = \# \bigg\{|a|+1 < p\le x : p\equiv -a\pmod{m}, P^-\pfrac{p+a}{m} > y \bigg\}.
\]
We estimate $\Phi_m(x,y)$ with sieve methods, starting with the base set
\[
\cA = \Big\{ \frac{p+a}{m} : |a|+1<p\le x, p\equiv -a\pmod{m} \Big\}.
\]
Since $\# \{b\in \cA : q|b \} = \pi(x;qm,-a) + O_a(1)$ for primes $q$,
heuristically we expect that $\Phi_m(x,y)$ is approximately $\pi(x) g_y(m)$.

We note that Mertens' theorem and \eqref{gym} imply that
\be\label{gy-lower}
\frac{1}{\phi(m)\log y} \ll_a g_y(m) \ll_a \frac{1}{\phi(m)\log y}
\qquad \big( (a,m)=1,2|am,P^+(m)\le y \big).
\ee

\medskip

\begin{lem}\label{lem:Phi}
Fix $A>0$ large and $\eps>0$ small.  Assume Hypothesis $Z(\gamma)$, $y\ge 2$, \eqref{theta} and \eqref{y}.
 Uniformly for $m\le x^{\theta}$, with $m\not\in \cE(x,y;\theta,A+1)$ and
 $P^+(m)\le y$,
\[
\Phi_m(x,y) = \pi(x) g_y(m) \( 1 + O_{a,\eps} \( e^{-(1-\eps) \ssc{u}{m}\log (\ssc{u}{m}+1)} + 
\frac{1}{(\log x)^A} \)\),
\]
where we define
\[
\ssc{u}{m}=\frac{\log (x^\theta/m)}{\log y}.
\]
\end{lem}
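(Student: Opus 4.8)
The plan is to estimate $\Phi_m(x,y)$ by a sieve applied to the base set $\cA = \{(p+a)/m : |a|+1<p\le x,\ p\equiv -a\pmod m\}$, sifting out those elements divisible by some prime $q\le y$ with $q\nmid am$. Since $\#\{b\in\cA: d\mid b\} = \pi(x;dm,-a)+O_a(1)$ for squarefree $d$ with $P^+(d)\le y$ and $(d,am)=1$, the natural density of the sifting sequence at $d$ is $1/\phi(d)$ (after normalizing by $\#\cA \approx \pi(x)/\phi(m)$), with multiplicative structure matching a sieve of dimension $1$. I would use the fundamental lemma of the sieve, or more efficiently the Fundamental Lemma in the sharp form that yields the Buchstab/Dickman-type savings $e^{-(1-\eps)u_m\log u_m}$ when the sifting range is $y$ and the level of the linear forms is $x^\theta/m$, i.e.\ $u_m = \log(x^\theta/m)/\log y$. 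The main term produced by the sieve is
\[
\#\cA \cdot \sprod{q\le y\\ q\nmid am}\Bigl(1-\frac1{q-1}\Bigr) \Bigl(1+O(e^{-(1-\eps)u_m\log(u_m+1)})\Bigr),
\]
and by Lemma \ref{lem:gym} the product times $1/m$ (the density of $\#\cA/\pi(x)$, up to the $1/\phi(m)$ versus $1/m$ bookkeeping) is exactly $g_y(m)$, giving the claimed shape $\pi(x)g_y(m)(1+\cdots)$.

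The key steps, in order: (1) Record Buchstab's identity or directly invoke the fundamental lemma to write $\Phi_m(x,y)$ as the main term above plus a remainder $\sum_{d} |r_d|$, where $d$ ranges over squarefree integers with $P^+(d)\le y$, $d\le x^{\theta-1/2}$ say (or up to $x^\theta/m$ with appropriate truncation of the sieve weights), $(d,am)=1$, and $r_d = \#\{b\in\cA:d\mid b\} - \frac{\pi(x)}{\phi(dm)}$. One must be a little careful: the sieve is applied to $\cA$ with sifting variable the primes dividing $(p+a)/m$, so $d$ here plays the role of $d$ in the set $\cE(x,y;\theta,A+1)$, and $|r_d| = |\pi(x;dm,-a)-\pi(x)/\phi(dm)| + O_a(1)$. (2) Bound the remainder: because $m\notin\cE(x,y;\theta,A+1)$, by the very definition of $\cE$ we have $\sum_{d\le x^\theta/m,\, P^+(d)\le y,\,(dm,a)=1}|\pi(x;dm,-a)-\pi(x)/\phi(dm)| \ll \pi(x)/(\phi(m)(\log x)^{A+1})$; the trivial $O_a(1)$ contributions over $d\le x^\theta/m$ with $P^+(d)\le y$ are negligible (crudely $\ll x^{\theta}$, which is absorbed since $\theta<1$, or one restricts $d$ to the shorter range $x^{\theta-1/2}$ used by the fundamental lemma and this is even more comfortable). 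Divided by the main term size $\pi(x)g_y(m)\asymp_a \pi(x)/(\phi(m)\log y)$ by \eqref{gy-lower}, the remainder is $\ll_a \log y/(\log x)^{A+1} \ll 1/(\log x)^A$ since $\log y\le\log x$. (3) Assemble: main term $+$ remainder $= \pi(x)g_y(m)\bigl(1+O_{a,\eps}(e^{-(1-\eps)u_m\log(u_m+1)}) + O_a(1/(\log x)^A)\bigr)$, which is the assertion (absorbing the $\eps$ loss in the exponent and possibly enlarging $A$ harmlessly; one also uses $\log(u_m+1)$ rather than $\log u_m$ so the bound is valid down to $u_m$ of order $1$, where the exponential factor is just an absolute constant and the statement is trivial from $\Phi_m(x,y)\le \#\cA$).

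The main obstacle is matching the sieve main term precisely to $\pi(x)g_y(m)$ and controlling the interplay of the two ranges of parameters. There are two distinct ``levels'' in play: the truncation level of the sieve (comfortably $x^{\theta-1/2}$ or even $x^{\delta'}$ for the fundamental lemma, which is what makes the error term manageable via the Cauchy–Schwarz bound built into Hypothesis $Z(\gamma)$ through Lemma \ref{lem:Eset}) and the larger level $x^\theta/m$ appearing in the definition of $\cE$ and in $u_m$. I need the fundamental lemma with sifting parameter $z=y$ and a level of distribution at least $y^{u_m}=x^\theta/m$ in order to get the savings $e^{-(1-\eps)u_m\log(u_m+1)}$; since $y\le x^\delta$ with $\delta$ small and $\theta<\gamma$ is fixed, one has $u_m \ge (\theta-\delta\theta)/\delta \gg 1/\delta$, which is large, so the fundamental-lemma error is genuinely of the stated exponential size — but I must confirm that the level of distribution required, $x^\theta/m \le x^\theta$, is below the range $x^{\gamma-\eps}$ where $\cE$ has small measure, i.e.\ I need $\theta<\gamma$, which is exactly hypothesis \eqref{theta}. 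The verification that $\delta$ small (depending on $\theta$) suffices is where the chain $y\le x^\delta \Rightarrow u_m$ large $\Rightarrow$ fundamental lemma applies cleanly must be made explicit, but it is routine once the parameters are lined up. Finally, the uniformity over $m$ is automatic because every estimate used (the definition of $\cE$, the bound \eqref{gy-lower}, and the fundamental lemma) is uniform in $m\le x^\theta$ with $P^+(m)\le y$.
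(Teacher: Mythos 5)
Your plan matches the paper's proof of Lemma \ref{lem:Phi}: apply the sieve fundamental lemma (Theorem 2.5$'$ of Halberstam--Richert) to $\cA$ with level $D=\max(y,x^\theta/m)$, bound the accumulated remainder $\sum_{d\le D}|\pi(x;dm,-a)-\pi(x)/\phi(dm)|$ directly from the definition of $\cE(x,y;\theta,A+1)$ using $m\notin\cE$, and convert to a relative error via the lower bound \eqref{gy-lower}. One small omission: the paper first dispatches the degenerate case $g_y(m)=0$ (i.e.\ $(a,m)>1$ or $a\equiv m\equiv 1\pmod 2$) by checking $\Phi_m(x,y)=0$ directly, which is worth a sentence since in that case the sieve density equals $1$ at some prime and the fundamental lemma would not apply in the usual form.
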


\begin{proof}
If $g_y(m)=0$ then either $(a,m)>1$ or $a\equiv m\equiv 1\pmod{2}$, and in this
case $\Phi_m(x,y)=0$ since $\Phi_m(x,y)$ counts only primes $p>|a|+1$.  The lemma
holds in this case.
Also if $y>x^\theta/m$ then $u_m\le 1$ and, in light of \eqref{gy-lower}, the lemma is claiming only
that $\Phi_m(x,y) \ll \pi(x)/(\phi(m) \log y)$.  This bound follows from an upper bound sieve
without the need for any prime number estimates.

Now assume $y\le x^{\theta}/m := D$.
When $g_y(m)\ne 0$, the lemma follows quickly from the  ``Fundamental Lemma'' of
the sieve, e.g., Theorem 2.5' in \cite{HR}.
We have
\be\label{Phim-sieve}
\Phi_m(x,y) = \pi(x) g_y(m) \Big( 1 + O_\eps(e^{-(1-\eps) \ssc{u}{m}\log \ssc{u}{m}}) \Big) +
O_a\bigg( D+ \ssum{d\le D \\ P^+(d)\le y \\ (dm,a)=1} \Big|\pi(x;dm,-a)-\frac{\pi(x)}{\phi(dm)}\Big| \bigg).
\ee
Since $m\not \in \cE(x,y;\theta,A+1)$, \eqref{gy-lower} implies that
\[
\ssum{d\le D \\ P^+(d)\le y \\ (dm,a)=1} \Big|\pi(x;dm,-a)-\frac{\pi(x)}{\phi(dm)}\Big| \le \frac{\pi(x)}{\phi(m) (\log x)^{A+1}} \ll_a \frac{\pi(x)g_y(m)}{(\log x)^A}.
\]
Finally, by \eqref{y}, $D\ll \pi(x) g_y(m) (\log x)^{-A}$ as well.
\end{proof}

\begin{lem}\label{Phi-crude}
For any $2\le y\le x$ and $m \le \frac{x+a}{y}$,
\[ 
\Phi_m(x,y) \ll_a \frac{x}{\phi(m) \log^2 y}.
\]
\end{lem}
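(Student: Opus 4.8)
The plan is to turn this into an upper-bound sieve estimate for the two linear forms $n$ and $mn-a$ over an interval of integers. First I would clear away degeneracies. If $(m,a)>1$, a prime $q_0\mid(m,a)$ divides $p$ for every $p$ counted, so $p=q_0$ and $\Phi_m(x,y)\le 1$; if $am$ is odd then $p+a$ is even, hence $(p+a)/m$ is even and $P^-((p+a)/m)=2\le y$, so $\Phi_m(x,y)=0$; if $x\le|a|+1$ the range of summation is empty. In each case the right-hand side is $\gg 1$, so the bound holds. Similarly, for $x$ or $y$ below any fixed bound depending on $a$, the estimate follows from $\Phi_m(x,y)\le\pi(x;m,-a)$ and Brun--Titchmarsh, since $x/m\ge xy/(x+a)\gg_a 1$ gives $\log(x/m)\gg_a 1$, and the factor $(\log y)^{-2}$ in the target is then $\gg_a 1$. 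So I may assume $(m,a)=1$, $2\mid am$, and $x,y$ large in terms of $a$.

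Now set $n=(p+a)/m$, $N=(x+a)/m$ (so $N\ge y$ by hypothesis), $z:=\min(y,N^{1/4})$, and $P(z):=\prod_{q\le z}q$. Since $P^-(n)>y$ forces $(n,P(z))=1$, and a prime $mn-a>z$ is coprime to $P(z)$ while those $n\le N$ with $mn-a\le z$ number $O_a(1+z/m)$, I get
\[
\Phi_m(x,y)\ \le\ \#\{\,1\le n\le N:\ (n,P(z))=1,\ (mn-a,P(z))=1\,\}\ +\ O_a(1+z/m).
\]
The counted set is sifted from $\{1,\dots,N\}$ with multiplicative density $\rho(q)=\#\{t\bmod q:q\mid t(mt-a)\}$, so $\rho(q)=2$ for $q\nmid ma$ and $\rho(q)=1$ for $q\mid ma$ (with $\rho(2)=1$ since $2\mid am$; thus $\rho(q)<q$ always), and $\sum_{q<t}\rho(q)\frac{\log q}{q}\le 2\log t+O(1)$, so this is a sieve of dimension at most $2$. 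The remainders $r_d=\#\{n\le N:d\mid n(mn-a)\}-\rho(d)N/d$ obey $|r_d|\le\rho(d)\le 2^{\omega(d)}$, because $d\mid n(mn-a)$ confines $n$ to $\rho(d)$ residue classes mod $d$, each meeting $[1,N]$ in $N/d+O(1)$ integers.

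Next I would apply a standard dimension-two upper-bound sieve with weights supported on $d<z$, i.e.\ of level $z^2$ (e.g.\ Selberg's sieve, \cite{HR}): the main term is $\ll N/(\log z)^2$ with an \emph{absolute} implied constant, and the error is
\[
\ll \sum_{\substack{d<z^2\\ d\mid P(z)}}3^{\omega(d)}|r_d|\ \le\ \sum_{d<z^2}6^{\omega(d)}\ \ll\ z^2(\log z)^5\ \le\ N^{1/2}(\log N)^5,
\]
which is $\ll N/(\log N)^2$ because $N\ge y$ is large. Since $\log z\asymp\log y$ (either $z=y$, or $z=N^{1/4}<y$ and then $y\le N<y^4$ forces $\log N\asymp\log y$), and since $N^{1/4}/m\ll N/(\log y)^2$, this gives $\Phi_m(x,y)\ll_a N/(\log y)^2$. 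Finally $N/(\log y)^2=(x+a)/(m(\log y)^2)\ll_a x/(\phi(m)(\log y)^2)$, using $x+a\ll_a x$ and $\phi(m)\le m$; in fact the factor $\phi(m)$ in the target costs nothing, since $x/\phi(m)\ge x/m\gg_a N$, so no singular-series input is needed.

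The delicate point is uniformity over the full ranges $m\le(x+a)/y$ and $2\le y\le x$. If one instead sieved the primes $p\equiv-a\pmod m$, the remainder terms would be $\pi(x;md,-a)-\pi(x)/\phi(md)$, and Brun--Titchmarsh — or any level-of-distribution result — is powerless once $md$ approaches $x$, which happens as soon as $m$ is large. Sieving the \emph{interval} $[1,N]$ makes each remainder trivially $O(1)$ per residue class, and taking $z=\min(y,N^{1/4})$ keeps the level $z^2\le\sqrt N$ so that the accumulated error stays $o(N)$, while still yielding the full saving $(\log y)^{-2}$ because $N<y^4$ automatically forces $\log N\asymp\log y$. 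I expect this balancing of sieve level against logarithmic saving to be the one genuinely fiddly step; everything else is routine.
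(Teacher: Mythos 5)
Your approach — a dimension-two sieve over the interval $[1,N]$ with $N=(x+a)/m$ and level $z^2\le\sqrt N$, with each remainder $r_d$ bounded trivially by $\rho(d)$ — is a perfectly legitimate way to prove this lemma; the paper itself gives no details beyond citing Theorem~2.2 of Halberstam--Richert. Your reduction to the interval sieve is cleanly set up, the degenerate cases are handled correctly (modulo the trivial point that when $am$ is odd one gets $\Phi_m\le 1$ rather than $0$, coming from $p=m-a$), and the balance $z=\min(y,N^{1/4})$ with $\log z\asymp\log y$ is exactly right.

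There is, however, one genuine error: the claim that ``the main term is $\ll N/(\log z)^2$ with an \emph{absolute} implied constant,'' and the later remark that ``no singular-series input is needed.'' The Selberg sieve main term is $\ll N\prod_{q<z}(1-\rho(q)/q)$, and with your $\rho$ one has
\[
\prod_{q<z}\Big(1-\frac{\rho(q)}{q}\Big)
= \tfrac12\prod_{2<q<z}\Big(1-\frac{2}{q}\Big)\cdot\prod_{\substack{2<q<z\\ q\mid ma}}\frac{q-1}{q-2}
\;\asymp_a\; \frac{1}{(\log z)^2}\cdot\frac{m}{\phi(m)},
\]
and $m/\phi(m)$ can be as large as $\asymp\log\log m$, so it is \emph{not} absolutely bounded. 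Consequently the intermediate assertion $\Phi_m(x,y)\ll_a N/(\log y)^2$ is false for $m$ with many small prime factors. The argument is rescued by exactly the factor you discard as ``costing nothing'': the singular series supplies the $m/\phi(m)$, which is precisely what is needed to replace $N=(x+a)/m$ by $(x+a)/\phi(m)$, yielding the stated $\Phi_m(x,y)\ll_a x/(\phi(m)\log^2 y)$. So the final bound is correct, but the ``wasteful'' step $\phi(m)\le m$ at the end is not a slack bound at all — it is where the singular series must be spent. The fix is local: track the product $\prod_{q<z}(1-\rho(q)/q)$ honestly rather than replacing it by $(\log z)^{-2}$, and the rest of your argument goes through unchanged.
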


\begin{proof}
This follows quickly from the upper-bound sieve, e.g. Theorem 2.2 in \cite{HR}.

\end{proof}

Our last result of this section is an easy 
estimate over smooth numbers.

\begin{lem}\label{smooth}
If $y\ge 2$, $t\ge 2$ and $y\ge (\log t)^2$ then 
\[
\ssum{m \ge t \\ P^+(m)\le y} \frac{1}{\phi(m)} \ll (\log y) e^{-s\log (s+1)},
\quad s=\frac{\log t}{\log y}.
\]
\end{lem}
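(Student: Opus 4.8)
The plan is to bound the sum over $y$-smooth integers $m\ge t$ by passing to a Dirichlet-type series with a free parameter $\sigma\in(0,1)$ and optimizing. First I would replace $1/\phi(m)$ by something multiplicative and easier to handle: since $\frac{m}{\phi(m)}=\prod_{q\mid m}\bigl(1-\tfrac1q\bigr)^{-1}\ll \log\log(3m)$, and more usefully $\frac{1}{\phi(m)}\le \frac{c}{m}\prod_{q\mid m}\bigl(1+\tfrac1q\bigr)$ for an absolute $c$, it suffices to bound $\ssum{m\ge t\\ P^+(m)\le y} \frac{h(m)}{m}$ where $h$ is the multiplicative function with $h(q^k)=1+\tfrac1q$ for all $k\ge1$. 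Then, using the standard Rankin trick, for any $0<\sigma<1$,
\[
\ssum{m\ge t \\ P^+(m)\le y} \frac{h(m)}{m} \le t^{-\sigma} \sprod{q\le y} \Bigl( 1 + \Bigl(1+\tfrac1q\Bigr)\sum_{k\ge 1} q^{-k(1-\sigma)} \Bigr) = t^{-\sigma} \sprod{q\le y} \Bigl( 1 + \frac{1+\tfrac1q}{q^{1-\sigma}-1} \Bigr).
\]

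The main work is then estimating the product and choosing $\sigma$. Writing $\sigma = 1 - \frac{\kappa}{\log y}$ for a parameter $\kappa\ge 0$ to be chosen (so $q^{1-\sigma}=e^{\kappa \log q/\log y}$), I would split the primes $q\le y$ into those with $\log q \le \log y/\kappa$ (where $q^{1-\sigma}-1 \asymp \kappa\log q/\log y$, contributing $\sum_{q} \frac{\log y}{\kappa q\log q}\asymp \frac{\log y}{\kappa}\cdot\frac{1}{\log y}\cdot\kappa$... more carefully, $\sum_{q\le z} \frac{1}{q\log q}$ converges, so the bulk contribution is $O(\log y/\kappa)$ from small primes via $\sum \frac{\log y}{\kappa \log q}\cdot\frac1q$ over $q$ up to $e^{\log y/\kappa}$, which is $\ll \frac{\log y}{\kappa}\log(\log y/\kappa)$... ) and large primes. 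The clean way: take logarithms, $\log \prod \le \sum_{q\le y} \frac{1+1/q}{q^{1-\sigma}-1}$, bound $1+1/q\le 2$, and use $\frac{1}{q^{1-\sigma}-1}\le \frac{1}{(1-\sigma)\log q}$ when $(1-\sigma)\log q\le 1$ and $\le \frac{1}{q^{1-\sigma}-1}\le \frac{2}{q^{1-\sigma}}$ otherwise; partial summation with the prime number theorem then gives $\sum_{q\le y}\frac{1}{q^{1-\sigma}}\ll \frac{y^{1-\sigma}}{(1-\sigma)\log y}= \frac{e^\kappa}{\kappa}$, and the small-prime part is $\ll \log(1/(1-\sigma)) + O(1) = \log(\log y/\kappa)+O(1)\ll \log\log y$. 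So altogether $\prod_{q\le y}(\cdots) \ll (\log y)^{O(1)} e^{O(e^\kappa/\kappa)}$, and $t^{-\sigma}=t^{-1}e^{\kappa \log t/\log y}=t^{-1}e^{\kappa s}$. Hence the bound is $\ll t^{-1}(\log y)^{O(1)} e^{\kappa s + O(e^\kappa/\kappa)}$; but we want the $t^{-1}$ absorbed — actually we don't, we want $(\log y)e^{-s\log(s+1)}$, so I should instead not extract $t^{-1}$ but keep $\sigma$ close to the value making $m^\sigma/m^{\sigma_0}$ balance. The correct choice is $\sigma$ with $t^{1-\sigma} = $ (product factor), i.e. effectively $\kappa \asymp \log(s+1)$: then $t^{-\sigma}\cdot t = t^{1-\sigma}= e^{\kappa s}$ is not what appears; rather one writes the whole bound without the spurious $t$ by noting $\sum_{m\ge t}\frac{h(m)}{m}$ with Rankin gives $t^{-(1-\sigma)}\cdot\sum_{m}\frac{h(m)}{m^\sigma}$ only after we've kept one full power of $1/m$... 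Let me restate: we bound $\ssum{m\ge t}\frac{h(m)}{m}\le t^{-\delta}\ssum{P^+(m)\le y}\frac{h(m)}{m^{1-\delta}}$ with $\delta=(1-\sigma)>0$, giving $t^{-\delta}\prod_{q\le y}(1+\frac{h(q)}{q^{1-\delta}-1})$ as above with $\kappa=\delta\log y$. Choosing $\delta = \frac{\log(s+1)}{\log y}$ (so $\kappa=\log(s+1)$, legitimate since $s\ge 1$ gives $\kappa\ge\log 2>0$ and $\delta<1$ needs $\log(s+1)<\log y$, i.e. $s+1<y$; if $s+1\ge y$ then $t\ge y^{y-1}$ is huge and the bound is trivial, or handle separately), we get $t^{-\delta}=e^{-\delta\log t}=e^{-s\log(s+1)}$ exactly, and the product factor is $\ll (\log y)^{O(1)}\exp(O(e^{\kappa}/\kappa)) = (\log y)^{O(1)}\exp(O((s+1)/\log(s+1)))$.

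The remaining gap is cosmetic: the stated bound has $(\log y)^1$ and a clean $e^{-s\log(s+1)}$, whereas my crude run produces $(\log y)^{O(1)}$ and an extra $\exp(O(s/\log s))$. The hypothesis $y\ge(\log t)^2$ is exactly what kills the extra factors: it gives $\log y \ge 2\log\log t$, and one checks $\log\log t = \log(s\log y)\le \log s + \log\log y$, so $s = \log t/\log y$ combined with $y\ge(\log t)^2$ forces $s\log(s+1)$ to dominate both $O(s/\log s)$ (trivially, with room to spare, once $s$ is large) and the $\log$-power; for bounded $s$ the whole left side is $\ll \sum_{P^+(m)\le y}\frac{1}{\phi(m)}\cdot$(tail) which is $O(\log y)$ directly from Mertens. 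So I would split into cases $s\le s_0$ (absolute constant, use Mertens-type bound $\sum_{P^+(m)\le y}1/\phi(m)\ll \log y$ plus any decay in $t$) and $s>s_0$ large (use the Rankin computation, absorbing the extra $\exp(O(s/\log s))$ and $(\log y)^{O(1)}=\exp(O(\log\log y))=\exp(O(\log s))$ into a fraction of $s\log(s+1)$, leaving $e^{-s\log(s+1)/2}\le e^{-s\log(s+1)}\cdot e^{s\log(s+1)/2}$ — no: leaving $\ll (\log y) e^{-s\log(s+1)}$ after renaming constants, since $s\log(s+1) - O(s/\log s) - O(\log s) \ge s\log(s+1) - \log\log y + O(1)$ and $e^{-s\log(s+1)}\cdot(\log y)$ is the target). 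The main obstacle is purely bookkeeping: tracking which powers of $\log y$ and which lower-order exponential terms get absorbed, and verifying the case split cleanly uses $y\ge(\log t)^2$; there is no deep input beyond the prime number theorem (or even just Mertens and Chebyshev) and the Rankin–Dirichlet-series trick.
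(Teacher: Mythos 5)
Your route is Rankin's trick, which is genuinely different from the paper's: the paper writes $1/\phi(m)=\sum_{m=dk}\mu^2(d)/(d\phi(d)k)$, sums the $d$-part directly, and estimates the inner $k$-sum by partial summation against the smooth-number bound $\Psi(w,y)\ll wu^{-u}+w^{\eps}$. Rankin's trick can succeed here, but your execution has a real gap in the absorption step.

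With $\kappa=\log(s+1)$, the exponent $\kappa s$ is spent exactly on producing $e^{-s\log(s+1)}$, so nothing remains to kill the surviving factor $\exp\bigl(O(e^{\kappa}/\kappa)\bigr)=\exp\bigl(O((s+1)/\log(s+1))\bigr)$, which diverges with $s$. You claim the hypothesis $y\ge(\log t)^2$ absorbs it, via $s\log(s+1)-O(s/\log s)-O(\log s)\ge s\log(s+1)-\log\log y+O(1)$, i.e.\ $\log\log y\gg s/\log s$; but the hypothesis gives only $s\lesssim\sqrt{y}/\log y$, hence $\log\log y\ge\log\log s+O(1)$, which is far weaker. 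Concretely, $y=2^{40}$ and $t=y^{100}$ (so $s=100$) satisfy $y\ge(\log t)^2$ while $\log\log y\approx 3.3$ and $s/\log s\approx 22$; the same example refutes your assertion $(\log y)^{O(1)}=\exp(O(\log s))$. The fix is small: take $\kappa=\log(s+1)+K$ for a fixed $K\ge 1$, so that $t^{-\delta}=e^{-s\log(s+1)-Ks}$, and $e^{-Ks}$ dominates $\exp(O(e^K(s+1)/\log(s+1)))$ for $s\ge s_0(K)$, with $s\le s_0$ handled by the trivial Mertens bound $\sum_{P^+(m)\le y}1/\phi(m)\ll\log y$. You should also extract exactly one factor $\log y$ from the Euler product by comparing $1/(q^{1-\delta}-1)$ to $1/(q-1)$ (whose sum is $\log\log y+O(1)$) rather than bounding crudely by $O(q^{\delta-1})$, so no exponent $>1$ appears. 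In this approach the true role of $y\ge(\log t)^2$ is not absorption but ensuring $\delta=\kappa/\log y<1$ so the Rankin series converges: the hypothesis gives $s\lesssim\sqrt{y}/\log y$, hence $\log(s+1)+K<\log y$ once $y$ is moderately large, with small $y$ forcing $s=O(1)$.
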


\begin{proof}
 Let  $c>0$ be sufficiently large.
  The sum is trivially $O(\log y)$, and this suffices for $s\le c$.
Now assume $s>c$, so that $y<t^{1/c}$ and also $t\ge 2^{c}$; that is, we may assume
that $t$ is sufficiently large.
For $(\log t)^2 \le y \le t^{1/c}$, write
\[
\frac{1}{\phi(m)} = \sum_{m=dk} \frac{\mu^2(d)}{d\phi(d) k}
\]
and sum over $m$.  This yields
\begin{equation}\label{sum-d}
\ssum{m \ge t \\ P^+(m)\le y} \frac{1}{\phi(m)} = \sum_{P^+(d)\le y} \frac{\mu^2(d)}{d\phi(d)} \ssum{k \ge t/d \\ P^+(k)\le y} \frac{1}{k}.
\end{equation}
The inner sum is $O(\log y)$ and therefore the contribution from
$d>t^{2/3}$ is $O(t^{-2/3}\log y)$.
Now assume that $1\le d\le t^{2/3}$.
Fix $\eps=\frac{1}{10}$.
By \cite[Ch. III.5, Exercise 293(c)]{Tenbook}, 
\[
\Psi(x,y) := \# \{n\le x: P^+(n)\le y \} \ll x u^{-u}+x^{\eps}, \qquad u=\frac{\log x}{\log y},
\]
uniformly for all $x\ge 2$ and $y\ge 2$.  Let
\[
v=\frac{\log \frac{t}{d}}{\log y} = s - \frac{\log d}{\log y},
\]
so that $v \ge s/3 \ge 10$.
 By partial summation,
\begin{align*}
\ssum{k\ge t/d \\ P^+(k)\le y} \frac{1}{k}  \le \int_{t/d}^\infty \frac{\Psi(w,y)}{w^2}\, dw
\ll_\eps \int_{t/d}^\infty w^{-1} e^{-\frac{\log w}{\log y}{\log v}}+\frac{1}{w^{2-\eps}}\, dw
\ll (\log y) e^{-v\log v} + (d/t)^{1-\eps}.
\end{align*}

The lower bound $y\ge (\log t)^2$ implies that
$\frac{1+\log s}{\log y} \le \frac{1}{2}$ and thus by the mean
value theorem,
\[
v\log v \ge s\log s - (1+\log s)\frac{\log d}{\log y} \ge
s\log s - \frac{\log d}{2}.
\]
It follows that the sum of terms on the right side of \eqref{sum-d} corresponding to 
$d\le t^{2/3}$ is $\ll (\log y) e^{-s\log s} + t^{\eps-1}$.
Since $s\log s \le \frac12 \log t$ and $s\log(s+1)=s\log s + O(1)$,
the right side of \eqref{sum-d} is 
\[
\ll  (\log y)e^{-s\log(s+1)} + (\log y)t^{-2/3} \ll (\log y)e^{-s\log(s+1)},
\]
 as desired.
\end{proof}

{\Large \section{The proof of Theorem \ref{thm:main}}}

By taking the implied constant in the conclusion of Theorem \ref{thm:main}
sufficiently large, we may assume without loss of generality that
$y$ satisfies \eqref{y}. Furthermore, if $y\le x^{1/\log_2 x}$, that is, $u\ge \log_2 x$,
then $e^{-\alpha u \log u} \ll_{a,A} (\log x)^{-A}$.  Hence, the conclusion in 
Theorem \ref{thm:main} for $y<x^{1/\log_2 x}$ follows from the conclusion for
$y=x^{1/\log_2 x}$.  We therefore may assume that
\begin{equation}\label{y-range-2}
x^{1/\log_2 x} \le y \le x^{\delta}.
\end{equation}
 We also assume that $x$ is sufficiently large
in terms of $a$.
Fix $\theta$ with $\alpha < \theta < \gamma$, $\eps>0$ satisfying 
$\theta-\eps > \alpha$, and $\delta>0$ sufficiently small as a function on $\theta$.
Recall that
\[
u : = \frac{\log x}{\log y}.
\]

Using Lemmas \ref{dTV-eq} and \ref{lem:gym}, we see that
\be\label{dTV-Phim}
d_{TV}(\bW_y, \bU_{x,y}) = \frac12 \sum_{P^+(m)\le y} 
\bigg| \frac{\Phi_m(x,y)}{\pi^*(x)}-g_y(m) \bigg|,
\ee
where $\pi^*(x) = \pi(x) - \pi(|a|+1)$.
As noted earlier, when $g_y(m) = 0$, we have $\Phi_m(x,y)=0$ and thus
such $m$ do not contribute to \eqref{dTV-Phim}.
Now break up the set $\{m\in \ZZ : P^+(m)\le y, g_y(m)\ne 0 \}$ into five pieces
(the conditions $P^+(m)\le y$ and $g_y(m)\ne 0$ assumed in all):
\begin{align*}
\cM_1 &= \{ m < x^\theta, m\not\in \cE(x,y;\theta,A+1) \}, \\
\cM_2 &= \{ m < x^{\theta}, m\in \cE(x,y;\theta,A+1) \}, \\
\cM_3 &= \{ x^{\theta} \le m \le (x+a)/y \}, \\
\cM_4 &= \{ (x+a)/y < m\le x+a \}, \\
\cM_5 &= \{ m>x+a \}.
\end{align*}

Throughout our proof, implied constants may depend on $a$, $A$, $\eps$ and $\theta$ only.  Now 
\[
\frac{1}{\pi^*(x)}=\frac{1}{\pi(x)} + O_a\pfrac{\log^2 x}{x^2}.
\]
Using Lemma \ref{lem:Phi} and \eqref{gy-lower}, 
\begin{align*}
\sum_{m\in \cM_1} \bigg| \frac{\Phi_m(x,y)}{\pi^*(x)}-g_y(m) \bigg| &\ll x^{-0.9}+
 (\log x)^{-A} \sum_{P^+(m)\le y} g_y(m) +  \ssum{m < x^{\theta} \\ P^+(m)\le y} g_y(m) e^{-(1-\eps)\ssc{u}{m}\log \ssc{u}{m}} \\
&\ll (\log x)^{-A} + \frac{1}{\log y}  \ssum{m< x^{\theta} \\ P^+(m)\le y} \frac{1}{\phi(m)} e^{-(1-\eps)\ssc{u}{m}\log \ssc{u}{m}}.
\end{align*}
We break the summation over $m$ on the right side into intervals $I_\ell=[y^{\ell},y^{\ell+1})$, where $0\le \ell \le \theta u$.
For $m\in I_\ell$, 
\[
\ssc{u}{m}\log (\ssc{u}{m}+1) = (\theta u-\ell)\log(\theta u - \ell+1) + O(\log u).
\]
Using Lemma \ref{smooth} when $\ell\ge 1$ (and recalling \eqref{y-range-2}), and Mertens' theorem when $\ell=0$, we obtain
\begin{align*}
\frac{1}{\log y} \ssum{m< x^{\theta} \\ P^+(m)\le y} \frac{1}{\phi(m)} e^{-(1-\eps)\ssc{u}{m}\log \ssc{u}{m}}
&\ll u^{O(1)}\sum_{0\le \ell \le \theta u} e^{-(1-\eps)B_\ell}, \\
&\qquad \text{where }B_\ell =  (\theta u-\ell)\log(\theta u - \ell+1) + \ell \log (\ell+1).
\end{align*}
Since the function $f(w)=w\log(w+1)$ is convex for $w\ge 0$,
\[
B_\ell \ge 2(\theta u/2)\log(\theta u/2+1) = \theta u \log u - O(u).
\]
Thus,
\be\label{M1}
\sum_{m\in \cM_1} \bigg| \frac{\Phi_m(x,y)}{\pi^*(x)}-g_y(m) \bigg| \ll 
(\log x)^{-A} + u^{O(1)} e^{-(1-\eps)\theta u\log u+O(u)} \ll (\log x)^{-A}+e^{-\alpha u\log u}.
\ee

When $m\in \cM_2$ we use the crude bound $\Phi_m(x,y) \ll x/m$ and thus,
by Lemma \ref{lem:Eset},
\be\label{M2}
\sum_{m\in \cM_2} \bigg| \frac{\Phi_m(x,y)}{\pi^*(x)}-g_y(m) \bigg| \ll
(\log x) \sum_{m\in \cE(x,y;\theta,A+1)} \frac{1}{\phi(m)} \ll (\log x)^{-A}.
\ee

When $m\in \cM_3$, we combine Lemma \ref{Phi-crude}, Lemma \ref{smooth}  and \eqref{gy-lower}, obtaining
\be\label{M3}
\sum_{m\in \cM_3} \bigg| \frac{\Phi_m(x,y)}{\pi^*(x)}-g_y(m) \bigg| \ll
\frac{u}{\log y} \ssum{P^+(m) \le y \\ m \ge x^\theta} \frac{1}{\phi(m)}
\ll u e^{-\theta u\log (\theta u+1)} \ll e^{-\alpha u\log u}.
\ee

When $m>(x+a)/y$, $\Phi_m(x,y)$ is 1 if $m-a$ is prime larger than $|a|+1$ and $m-a\le x$, and zero otherwise.
  Start with the triangle inequality and \eqref{gy-lower},
  and use the same analysis as in the case $m\in \cM_3$:
\begin{align*}
\sum_{m\in \cM_4} \bigg| \frac{\Phi_m(x,y)}{\pi^*(x)}-g_y(m) \bigg| &\ll 
\frac{\log x}{x} \sum_{m\in \cM_4} \Phi_m(x,y) + \frac{1}{\log y}
\sum_{m\in \cM_4} \frac{1}{\phi(m)}\\
&\ll \frac{\log x}{x} \# \{|a|+1< p\le x : P^+(p+a)\le y \} + e^{-(1-\eps)u\log u},
\end{align*}
where we applied \eqref{y} in the last step, assuming that $\delta$ is 
sufficiently small.
By Theorem 1 of \cite{PS}\footnote{This is stated only for $a=-1$ but the proof works for any $a$ with no essential changes.} and standard bounds for the Dickman function
$\rho(u)$ (see, e.g., \cite[Ch. III.5]{Tenbook}),
\[
\# \{|a|+1 <p\le x : P^+(p+a)\le y \} \ll \pi(x) u \rho(u)
\ll \pi(x) e^{-(1-\eps)u\log u}.
\]
Thus,
\be\label{M4}
\sum_{m\in \cM_4} \bigg| \frac{\Phi_m(x,y)}{\pi^*(x)}-g_y(m) \bigg| \ll
e^{-(1-\eps)u\log u}. 
\ee

Finally, when $m\in \cM_5$, $\Phi_m(x,y)=0$ and thus by \eqref{gy-lower}
and Lemma \ref{smooth} we have
\be\label{M5}
\sum_{m\in \cM_5} \bigg| \frac{\Phi_m(x,y)}{\pi^*(x)}-g_y(m) \bigg| \ll
\frac{1}{\log y}\ssum{m>x+a \\ P^+(m)\le y} \frac{1}{\phi(m)} \ll
e^{-(1-\eps)u\log u}.
\ee

Gathering the estimates \eqref{M1}--\eqref{M5}, and recalling \eqref{dTV-Phim},
the proof is complete.

%

\bigskip

{\Large \section{Poisson approximation: the proof of Theorem \ref{thm2}}}

Let $z$ be a complex number with $|z| \le 1.9$.
From the definitions \eqref{RT} and \eqref{RTT} we have
\begin{align*}
\E z^{R_T} &= \prod_{q\in T} \(1 + \frac{z-1}{q-1}\) = 
e^{(z-1)H_1(T)}\(1+ O \(|z-1|^2 H_2(T)\) \)
\end{align*}
and
\begin{align*}
\E z^{\widetilde{R}_T} = \prod_{q\in T} \(1+ \frac{q(z-1)}{(q-1)(q-z)}\)
=e^{(z-1)H'_1(T)}\(1+ O \(|z-1|^2 H_2(T)\) \).
\end{align*}
These are analogs of (3.2) and (3.3) in \cite{FPoisson}.
The proofs of \cite[Theorems 5,6]{FPoisson}  then imply the following.

\bigskip

\begin{lem}\label{dTV-cor}
Let $T$ be a finite  subset of primes.  If
$Z$ is a Poisson variable with parameter $H_1(T)$ then
\[
d_{TV}(R_T,Z) \ll \frac{H_2(T)}{1+H_1(T)}
\]
and if $Z'$ is a Poisson random variable with parameter $H_1'(T)$ then
\[
d_{TV}(\widetilde{R}_T,Z') \ll \frac{H_2(T)}{1+H_1(T)},
\]
 \end{lem}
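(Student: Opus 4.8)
The plan is to prove Lemma~\ref{dTV-cor} by adapting the generating-function method used in the proof of \cite[Theorems 5,6]{FPoisson}, exploiting the two estimates for $\E z^{R_T}$ and $\E z^{\widetilde R_T}$ established just above. Both halves of the lemma have the same shape, so I would treat the $R_T$ case in detail and remark that the argument for $\widetilde R_T$ is identical after replacing $H_1(T)$ by $H_1'(T)$ (and noting $H_1(T)\le H_1'(T)$, so the final bound is still in terms of $H_1(T)$). Write $\lambda=H_1(T)$, $\lambda'=H_1'(T)$, and abbreviate $H=H_2(T)$. The key point is that if $Z_\lambda$ is Poisson with parameter $\lambda$ then $\E z^{Z_\lambda}=e^{(z-1)\lambda}$, so the computation above says exactly that the probability generating functions of $R_T$ and of $Z_\lambda$ agree up to a multiplicative factor $1+O(|z-1|^2 H)$ on the disk $|z|\le 1.9$.

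The main step is to convert this closeness of generating functions into closeness in total variation distance. By Lemma~\ref{dTV-eq}, $2\,d_{TV}(R_T,Z_\lambda)=\sum_{k\ge 0}|\PR(R_T=k)-\PR(Z_\lambda=k)|$, and each coefficient is recovered by Cauchy's formula, $\PR(R_T=k)-\PR(Z_\lambda=k)=\frac{1}{2\pi i}\oint_{|z|=r}\frac{\E z^{R_T}-\E z^{Z_\lambda}}{z^{k+1}}\,dz$ for any $r\in(0,1.9)$. Writing $\E z^{R_T}-\E z^{Z_\lambda}=e^{(z-1)\lambda}\cdot O(|z-1|^2 H)$ and choosing the radius $r$ appropriately — taking $r$ bounded below but also allowing $r$ to grow with $k$, e.g. $r\asymp 1+\min(1,\sqrt{k/\lambda}\,)$ in the regime $k\gtrsim\lambda$, exactly as in \cite{FPoisson} — one bounds $|\PR(R_T=k)-\PR(Z_\lambda=k)|\ll H\,r^{-k}\sup_{|z|=r}|z-1|^2 e^{(\Re z-1)\lambda}$, and then sums over $k$. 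The $k$-sum of $r^{-k}$ against the saddle-point bound for $e^{(\Re z-1)\lambda}$ produces the factor $\frac{1}{1+\lambda}$: morally, one is integrating a Poisson-type density of ``width'' $\sqrt{\lambda}$, and the extra $|z-1|^2$ together with the contour optimization yields the gain $1/(1+\lambda)$ over the trivial bound $O(H)$. For small $\lambda$ (say $\lambda\le 1$) one can instead argue directly: expand both $\E z^{R_T}$ and $e^{(z-1)\lambda}$ and compare coefficients termwise, or simply note $d_{TV}\ll H\ll H/(1+\lambda)$ when $H$ is already small, handling the case $H\ge c$ separately since then the bound is trivial.

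The hard part will be the contour/saddle-point bookkeeping in the range $k$ much larger than $\lambda$, where one must choose the radius $r>1$ growing with $k$ and check that $\sup_{|z|=r}|z-1|^2 e^{(\Re z-1)\lambda} r^{-k}$, summed over such $k$, contributes $O(H/(1+\lambda))$ and not something larger; this is precisely the technical heart of \cite[Theorems 5,6]{FPoisson}, and I would either reproduce that optimization or, more economically, cite it directly after verifying that our generating-function estimates are of exactly the form required there. A cleaner alternative for the write-up, which I would mention, is to observe that $R_T$ is a sum of independent Bernoulli variables $\one[W_q\ge 1]$ with success probabilities $1/(q-1)$, so the classical Poisson approximation bound (Le Cam / Chen--Stein) gives $d_{TV}(R_T,Z_\lambda)\ll\sum_{q\in T}\frac{1}{(q-1)^2}\cdot\frac{1}{1+\lambda}\ll\frac{H_2(T)}{1+H_1(T)}$ immediately; for $\widetilde R_T$, which is a sum of independent integer-valued variables, one uses the corresponding ``compound/shifted'' Poisson approximation inequality, or first compares $\widetilde R_T$ to $R_T$ (they differ only when some $W_q\ge 2$, of total probability $\le\sum_q 1/q^2=H_2(T)$) and then compares $R_T$ to $Z'$ using $\lambda'-\lambda=\sum_q(\frac{q}{(q-1)^2}-\frac1{q-1})=\sum_q\frac1{(q-1)^2}\ll H_2(T)$ together with Lemma~\ref{dTV-Poisson}. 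This Stein-method route avoids the contour integral entirely, at the cost of invoking a standard external inequality, and I would present it as the primary argument with the generating-function approach of \cite{FPoisson} noted as the source of the stated form.
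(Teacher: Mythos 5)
Your first route --- record the generating-function estimates $\E z^{R_T}=e^{(z-1)H_1(T)}\bigl(1+O(|z-1|^2H_2(T))\bigr)$ and $\E z^{\widetilde R_T}=e^{(z-1)H_1'(T)}\bigl(1+O(|z-1|^2H_2(T))\bigr)$ on $|z|\le 1.9$ and then hand them to the coefficient-extraction/saddle-point machinery of \cite[Theorems 5,6]{FPoisson} --- is precisely what the paper does. The paper does not redo the contour optimization; it observes that the two identities above are the exact analogues of (3.2) and (3.3) in \cite{FPoisson} and cites those theorems directly. So if you had stopped there, you would have matched the paper.

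However, you propose the Stein-method route as your primary argument, and that route as you have sketched it has a genuine gap in the $\widetilde R_T$ case. The Barbour--Hall refinement of Le Cam is fine for $R_T$, since $R_T=\sum_{q\in T}\one[W_q\ge1]$ is a sum of independent Bernoullis with parameters $1/(q-1)$, and the inequality $d_{TV}(R_T,Z)\le\min(1,1/\lambda)\sum_q (q-1)^{-2}\ll H_2(T)/(1+H_1(T))$ gives the first bound cleanly. But the concrete fallback you give for $\widetilde R_T$ --- compare $\widetilde R_T$ to $R_T$ via the coupling bound $d_{TV}(\widetilde R_T,R_T)\le\PR(\exists q\in T: W_q\ge 2)\ll H_2(T)$, then pass from $R_T$ to $Z'$ via Lemma~\ref{dTV-Poisson} --- only yields $d_{TV}(\widetilde R_T,Z')\ll H_2(T)$. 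The factor $1/(1+H_1(T))$ is lost at the very first step, and that factor is exactly what makes the conclusion useful in Theorem~\ref{thm2}: without it, $\sum_j H_2(T_j)$ is merely $O(1)$, not small, and the Poisson approximation would say nothing. The alternative phrase ``a compound/shifted Poisson approximation inequality'' is not fleshed out, and the standard compound Poisson theorems approximate by a compound Poisson law, not the plain Poisson with matched mean $H_1'(T)$ that the lemma requires. To salvage the Stein route for $\widetilde R_T$ you would need a stated and verified bound for sums of independent nonnegative integer-valued variables that retains the $\min(1,1/\lambda)$ prefactor; absent that, you should fall back on the generating-function argument, which is the paper's proof.
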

 
Now suppose that  $T_1,\ldots,T_m$ are disjoint nonempty sets of primes in $[2,y]$.
 For each $1\le i\le m$, suppose that either
 (i) $f_i=\omega(p+a,T_i)$ and $R^{(i)}=R_T$  or that (ii)
$f_i=\Omega(p+a,T_i)$ and $R^{(i)}=\widetilde{R}_T$.
Let $Z_1,\ldots,Z_m$ be as in Theorem \ref{thm2}.
We now combine Lemma \ref{dTV-cor} with Lemma \ref{dTV-sum} to obtain
\[
d_{TV}\big( (R^{(1)},\ldots,R^{(m)}), (Z_1,\ldots,Z_m) \big) \ll
\sum_{i=1}^m \frac{H_2(T_i)}{1+H_1(T_i)}.
\]
Theorem \ref{thm2} now follows from Theorem \ref{thm:main}
and the triangle inequality for $d_{TV}$.

{\Large \section{The transference principle: proof of Theorem \ref{thm:transference}}}

Assume Hypothesis $Z(\gamma)$ with $\gamma>0$ 
and that $0 < \alpha < \gamma$.
Define $F(n) = (f_1(n),\ldots,f_m(n))$. Let
\[
H(T) = \sum_{i\in T} \frac{1}{i}.
\]
Notice that $H(T)$, $H_1(T)$ and $H_1'(T)$ are all within $H_2(T)$ of each other.
If $f_i(n)=\omega(n;T_i)$ let $Z_i'$ be a Poisson random variable with parameter $H(T_i)$,
and if  $f_i(n)=\Omega(n;T_i)$ let $Z_i'$ be a Poisson random variable with parameter $H_1(T_i)$.
Assume also that $Z_1',\ldots,Z_m'$ are independent.
Then, by Theorem 1 of \cite{FPoisson},
\[
\big| \PR(F(n)\in \sR) - \PR( (Z_1',\ldots,Z_m')\in \sR ) \big|
\ll \sum_{j=1}^m \frac{H_2(T_j)}{1+H_1(T_j)} +  e^{-u\log u}, \quad u= \frac{\log x}{\log y}.
\]
On the other hand, Theorem \ref{thm2} implies
\[
\big| \PR(F(p+a)\in \sR) - \PR( (Z_1,\ldots,Z_m)\in \sR ) \big|
\ll_{a,A,\alpha} \sum_{j=1}^m \frac{H_2(T_j)}{1+H_1(T_j)} + 
e^{-\alpha u\log u} + \frac{1}{(\log x)^A}.
\]
Applying Lemmas \ref{dTV-sum} and \ref{dTV-Poisson}, we have
\begin{align*}
\big| \PR( (Z_1',\ldots,Z_m')\in \sR ) - \PR( (Z_1,\ldots,Z_m)\in \sR ) \big|
&\le d_{TV} \big( (Z_1',\ldots,Z_m'), (Z_1,\ldots,Z_m) \big) \\
&\le \sum_{j=1}^m d_{TV} (Z_j', Z_j) \\
&\ll \sum_{j=1}^m \frac{H_2(T_i)}{1+\sqrt{H_1(T_i)}}.
\end{align*}
The theorem now follows from the triangle inequality.

\bigskip


\end{document}